\documentclass[12 pt,twoside]{amsart}

\usepackage{amsopn}
\usepackage{amssymb}
\usepackage{amscd}

\newtheorem{theorem}{Theorem}[section]
\newtheorem{lemma}[theorem]{Lemma}
\newtheorem{proposition}[theorem]{Proposition}
\newtheorem{corollary}[theorem]{Corollary}

\theoremstyle{definition}
\newtheorem{definition}[theorem]{Definition}

\theoremstyle{remark}
\newtheorem{remark}[theorem]{Remark}

\numberwithin{equation}{section}

\begin{document}

\title[$J$-class operators and hypercyclicity]{$J$-class operators and hypercyclicity}

\author[George Costakis]{George Costakis}
\address{Department of Mathematics, University of Crete, Knossos Avenue, GR-714 09 Heraklion, Crete, Greece}
\email{costakis@math.uoc.gr}
\thanks{}

\author[Antonios Manoussos]{Antonios Manoussos}
\address{Fakult\"{a}t f\"{u}r Mathematik, SFB 701, Universit\"{a}t Bielefeld, Postfach 100131, D-33501 Bielefeld, Germany}
\email{amanouss@math.uni-bielefeld.de}
\thanks{During this research the second author was fully supported by SFB 701 ``Spektrale Strukturen und
Topologische Methoden in der Mathematik" at the University of Bielefeld, Germany. He would also like to express his gratitude to Professor H. Abels for his support.}

\subjclass[2000]{Primary 47A16; Secondary 37B99, 54H20}

\date{}

\keywords{Hypercyclic operators, $J$-class operators, $J^{mix}$-class operators, unilateral and bilateral weighted shifts, cyclic vectors.}

\begin{abstract}
The purpose of the present work is to treat a new notion related to linear dynamics, which can be viewed as a ``localization" of the notion of hypercyclicity. In
particular, let $T$ be a bounded linear operator acting on a Banach space $X$ and let $x$ be a non-zero vector in $X$ such that for every open neighborhood $U\subset
X$ of $x$ and every non-empty open set $V\subset X$ there exists a positive integer $n$ such that $T^{n}U\cap V\neq\emptyset$. In this case $T$ will be called a
$J$-class operator. We investigate the class of operators satisfying the above property and provide various examples. It is worthwhile to mention that many results
from the theory of hypercyclic operators have their analogues in this setting. For example we establish results related to the Bourdon-Feldman theorem and we
characterize the $J$-class weighted shifts. We would also like to stress that even non-separable Banach spaces which do not support topologically transitive
operators, as for example $l^{\infty}(\mathbb{N})$, do admit $J$-class operators.
\end{abstract}

\maketitle

\section{Introduction}
Let $X$ be a complex (or real) Banach space. In the rest of the paper the symbol $T$ stands for a bounded linear operator acting on $X$. We first fix some notation.
Consider any subset $C$ of $X$. The symbols $C^o$, $\overline{C}$ and $\partial C$ denote the interior, the closure and the boundary of $C$ respectively. The symbol
$Orb(T,C)$ denotes the orbit of $C$ under $T$, i.e. $Orb(T,C)=\{ T^nx: x\in C,\,\, n=0,1,2,\ldots \}$. If $C=\{ x\}$ is a singleton and the orbit $Orb(T,x)$ is dense
in $X$, the operator $T$ is called hypercyclic and the vector $x$ is a hypercyclic vector for $T$. If $C=\{ \lambda x: \lambda \in \mathbb{C} \}=\mathbb{C} x$ and the
set $Orb(T,C)$ is dense in $X$, the operator $T$ is called supercyclic and the vector $x$ is a supercyclic vector for $T$. A nice source of examples and properties of
hypercyclic and supercyclic operators is the survey article \cite{GE}, see also some recent survey articles \cite{Sh}, \cite{GE2}, \cite{MoSa2}, \cite{BoMaPe},
\cite{Fe3}, \cite{GE3} and the recent book \cite{BaMa3}. Observe that in case the operator $T$ is hypercyclic the underlying Banach space $X$ should be separable.
Then it is well known and easy to show that \textit{an operator $T:X\to X$ is hypercyclic if and only if for every pair of non-empty open sets $U,V$ of $X$ there
exists a positive integer $n$ such that $T^n(U)\cap V\neq \emptyset$}. The purpose of this paper is twofold. Firstly we somehow ``localize" the notion of
hypercyclicity by introducing certain sets, which we call $J$-sets. The notion of $J$-sets is well known in the theory of topological dynamics, see \cite{BaSz}.
Roughly speaking, if $x$ is a vector in $X$ and $T$ an operator, then the corresponding $J$-set of $x$ under $T$ describes the asymptotic behavior of all vectors
nearby $x$. To be precise for a given vector $x\in X$ we define

\begin{equation*}
\begin{split}
J(x)=\{ &y\in X:\,\mbox{ there exist a strictly increasing sequence of positive}\\
&\mbox{integers}\,\{k_{n}\}\,\mbox{and a sequence }\,\{x_{n}\}\subset X\,\mbox{such that}\, x_{n}\rightarrow x\,
\mbox{and}\\
&T^{k_{n}}x_{n}\rightarrow y\}.
\end{split}
\end{equation*}

Secondly we try to develop a systematic study of operators whose $J$-set under some vector is the whole space. As it turns out this new class of operators although is
different from the class of hypercyclic operators, shares some similarities with the behavior of hypercyclic operators. In fact it is not difficult to see that if $T$
is hypercyclic then $J(x)=X$ for every $x\in X$. On the other hand we provide examples of operators $T$ such that $J(x)=X$ for some vector $x\in X$ but $T$ fails to
be hypercyclic and in general $T$ need not be even multi-cyclic. This should be compared with the results of Feldman in \cite{Fel1} where he shows that a countably
hypercyclic operator need not be multi-cyclic. We would like to stress that non-separable Banach spaces, as the space $l^{\infty}(\mathbb{N})$ of bounded sequences,
supports $J$-class operators, (see Proposition 5.2), while it is known that the space $l^{\infty}(\mathbb{N})$ does not support topologically transitive operators,
see \cite{BeKa}.

The paper is organized as follows. In section 2 we define the $J$-sets and we examine some basic properties of these sets. In section 3 we investigate the relation
between hypercyclicity and $J$-sets. In particular we show that $T:X\to X$ is hypercyclic if and only if there exists a cyclic vector $x\in X$ such that $J(x)=X$.
Recall that a vector $x$ is cyclic for $T$ if the linear span of the orbit $Orb(T,x)$ is dense in $X$. The main result of section 4 is a generalization of a theorem
due to Bourdon and Feldman, see \cite{BF}. Namely, we show that if $x$ is a cyclic vector for an operator $T:X\to X$ and the set $J(x)$ has non-empty interior then
$J(x)=X$ and, in addition, $T$ is hypercyclic. In section 5 we introduce the notion of $J$-class operator and we establish some of its properties. We also present
examples of $J$-class operators which are not hypercyclic. On the other hand, we show that if $T$ is a  bilateral or a unilateral weighted shift on the space of
square summable sequences then $T$ is hypercyclic if and only if $T$ is a $J$-class operator. Finally, in section 6 we give a list of open problems.

\section{Preliminaries and basic notions}
If one wants to work on general non-separable Banach spaces and in order to investigate the dynamical behavior of the iterates of $T$, the suitable substitute of
hypercyclicity is the following well known notion of topological transitivity which is frequently used in dynamical systems.

\begin{definition}
An operator $T:X\to X$ is called \textit{topologically transitive} if for every pair of open sets $U,V$ of $X$ there exists a positive integer $n$ such that $T^nU\cap
V\neq \emptyset $.
\end{definition}

\begin{definition}
Let $T:X\rightarrow X$ be an operator. For every $x\in X$ the sets
\begin{equation*}
\begin{split}
L(x)=\{&y\in X:\,\mbox{ there exists a strictly increasing sequence}\\
       &\mbox{of positive integers}\,\,\{k_{n}\}\mbox{ such that }\,T^{k_{n} }x\rightarrow y\}
\end{split}
\end{equation*}
and
\begin{equation*}
\begin{split}
J(x)=\{ &y\in X:\,\mbox{ there exist a strictly increasing sequence of positive}\\
&\mbox{integers}\,\{k_{n}\}\,\mbox{and a sequence }\,\{x_{n}\}\subset X\,\mbox{such that}\, x_{n}\rightarrow x\,
\mbox{and}\\
&T^{k_{n}}x_{n}\rightarrow y\}
\end{split}
\end{equation*}
denote the \textit{limit set} and the \textit{extended (prolongational) limit set} of $x$ under $T$ respectively. In case $T$ is invertible and for every $x\in X$ the
sets $L^{+}(x)$, $J^{+}(x)$ ($L^{-}(x)$, $J^{-}(x)$) denote the limit set and the extended limit set of $x$ under $T$ ($T^{-1}$).
\end{definition}

\begin{remark}
An equivalent definition of $J(x)$ is the following.
\begin{equation*}
\begin{split}
J(x)=\{ &y\in X:\,\mbox{for every pair of neighborhoods}\, U,V\,\mbox{of}\, x,y\\
        &\mbox{respectively, there exists a positive integer}\,n,\\
        &\mbox{such that}\, T^{n}U\cap V\neq\emptyset \}.
\end{split}
\end{equation*}
Observe now that $T$ is topologically transitive if and only if $J(x)=X$ for every $x\in X$.
\end{remark}

\begin{definition}
Let $T:X\rightarrow X$ be an operator. A vector $x$ is called \textit{periodic} for $T$ if there exists a positive integer $n$ such that $T^nx=x$.
\end{definition}

The proof of the following lemma can be found in \cite{CosMa}.

\begin{lemma}
Let $T:X\rightarrow X$ be an operator and $\{ x_{n}\}$, $\{ y_{n} \}$ be two sequences in $X$ such that
$x_{n}\rightarrow x$ and $y_{n} \rightarrow y$ for some $x,y\in X$. If $y_{n} \in J(x_{n})$ for every
$n=1,2,\ldots$, then $y\in J(x)$.
\end{lemma}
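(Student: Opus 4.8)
The plan is to use the equivalent, purely topological description of the prolongational limit set recorded in the Remark: $y\in J(x)$ if and only if for every neighborhood $U$ of $x$ and every neighborhood $V$ of $y$ there exists a positive integer $n$ with $T^{n}U\cap V\neq\emptyset$. Passing to this reformulation avoids manipulating the approximating sequences and the increasing index sequences directly, and reduces the statement to an elementary observation about convergence in a topological space.

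First I would fix an arbitrary open neighborhood $U$ of $x$ and an arbitrary open neighborhood $V$ of $y$. Since $x_{n}\to x$ and $U$ is open, there is $N_{1}$ with $x_{n}\in U$ for all $n\ge N_{1}$; since $y_{n}\to y$ and $V$ is open, there is $N_{2}$ with $y_{n}\in V$ for all $n\ge N_{2}$. Choose any $m\ge\max\{N_{1},N_{2}\}$. Then $U$ is an open neighborhood of the point $x_{m}$ and $V$ is an open neighborhood of $y_{m}$.

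Now I invoke the hypothesis $y_{m}\in J(x_{m})$, again via the neighborhood characterization: applied to the neighborhoods $U$ of $x_{m}$ and $V$ of $y_{m}$ it produces a positive integer $k$ with $T^{k}U\cap V\neq\emptyset$. Since $U$ and $V$ were arbitrary neighborhoods of $x$ and $y$ respectively, this is precisely the defining condition for $y\in J(x)$, and the proof is complete.

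There is essentially no obstacle; the only point requiring a moment's care is the trivial remark that an open neighborhood of $x$ which happens to contain $x_{m}$ is automatically a neighborhood of $x_{m}$, so that the characterization of $J$ may legitimately be applied at the shifted base point. If one insisted on arguing straight from the sequential definition, one would run a diagonal argument instead: for each $n$, using $y_{n}\in J(x_{n})$, select an integer $k_{n}$ and a vector $z_{n}$ with $\|z_{n}-x_{n}\|<1/n$ and $\|T^{k_{n}}z_{n}-y_{n}\|<1/n$, pass to a subsequence to make the $k_{n}$ strictly increasing, and note $z_{n}\to x$ and $T^{k_{n}}z_{n}\to y$. The neighborhood route is cleaner, and it is the one I would present.
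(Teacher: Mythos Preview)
Your argument is correct. The neighborhood characterization from Remark~2.3 reduces the lemma to a one-line observation, and your diagonal-argument sketch is also sound (the strictly increasing choice of $k_n$ is available because membership in $J(x_n)$ furnishes arbitrarily large exponents meeting any prescribed tolerance).

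As for comparison: the present paper does not actually supply a proof of this lemma---it simply refers the reader to \cite{CosMa}. So there is no in-paper argument to compare against. What can be said is that your neighborhood approach is the natural one and is almost certainly what the authors have in mind; the equivalence in Remark~2.3 is stated precisely so that facts of this kind become transparent. The sequential diagonal argument you outline is the alternative one would expect to see if the neighborhood reformulation were not available, and it is worth noting (as you implicitly do) that the freedom to take $k_n$ as large as one likes, not merely the existence of \emph{some} $k_n$, is what makes the subsequence extraction go through.
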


\begin{proposition}
For all $x\in X$ the sets $L(x)$, $J(x)$ are closed and $T$-invariant.
\end{proposition}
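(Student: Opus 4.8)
The plan is to verify each of the two properties for both sets, handling $L(x)$ and $J(x)$ by essentially the same arguments, and to reduce the closedness of $J(x)$ to the lemma immediately preceding this proposition.

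First I would dispose of $T$-invariance, which is the soft part. If $y\in L(x)$, pick a strictly increasing sequence $\{k_n\}$ of positive integers with $T^{k_n}x\to y$; continuity of $T$ gives $T^{k_n+1}x=T(T^{k_n}x)\to Ty$, and $\{k_n+1\}$ is again a strictly increasing sequence of positive integers, so $Ty\in L(x)$, i.e.\ $T(L(x))\subseteq L(x)$. The same computation applies to $J(x)$: if $y\in J(x)$ is witnessed by $\{k_n\}$ and a sequence $x_n\to x$ with $T^{k_n}x_n\to y$, then $T^{k_n+1}x_n\to Ty$ with the same approximating sequence $\{x_n\}$ and the strictly increasing sequence $\{k_n+1\}$, whence $Ty\in J(x)$.

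For closedness of $J(x)$ I would simply invoke the preceding lemma: given $y_m\in J(x)$ with $y_m\to y$, apply the lemma to the constant sequence $x_m:=x$ together with $\{y_m\}$; since $y_m\in J(x)=J(x_m)$ for every $m$, the lemma yields $y\in J(x)$. For closedness of $L(x)$ I would give a direct diagonalization (the standard fact that the set of subsequential limits of a sequence in a metric space is closed): taking $y_m\in L(x)$ with $y_m\to y$, choose for each $m$ a strictly increasing sequence $\{k^{(m)}_j\}_j$ with $T^{k^{(m)}_j}x\to y_m$, and then inductively pick $n_1<n_2<\cdots$ with $n_m\in\{k^{(m)}_j\}_j$ and $\|T^{n_m}x-y_m\|<1/m$, so that $\|T^{n_m}x-y\|\le\|T^{n_m}x-y_m\|+\|y_m-y\|\to 0$ and hence $y\in L(x)$. (The empty-set case is vacuous, since $\emptyset$ is closed and $T$-invariant.)

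I do not anticipate any real obstacle; the only point requiring a little care is arranging, in the diagonal argument for $L(x)$, that the chosen indices $n_m$ are strictly increasing while each $n_m$ is drawn from the $m$-th approximating sequence, which the inductive choice above handles. Alternatively, one could also deduce closedness of $L(x)$ from the lemma via the inclusion $L(x)\subseteq J(x)$ combined with a constant-sequence argument, but the diagonalization is cleaner and self-contained.
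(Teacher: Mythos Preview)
Your proof is correct and follows essentially the same route as the paper, which simply says ``It is an immediate consequence of the previous lemma.'' You spell out what the paper leaves implicit: the $T$-invariance of both sets via the index shift $k_n\mapsto k_n+1$, the closedness of $J(x)$ by applying Lemma~2.5 with the constant sequence $x_m=x$, and the closedness of $L(x)$ by a standard diagonal argument.
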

\begin{proof}
It is an immediate consequence of the previous lemma.
\end{proof}

\begin{remark}
Note that the set $J(x)$ is not always invariant under the operation $T^{-1}$ even in the case $T$ is surjective. For example consider the operator $T=\frac{1}{2}B$
where $B$ is the backward shift operator on, $l^{2}(\mathbb{N})$, the space of square summable sequences. Since $\| T\|=\frac{1}{2}$ it follows that $L(x)=J(x)=\{
0\}$ for every $x\in l^{2}(\mathbb{N})$. For any non-zero vector $y\in KerT$ we have $Ty=0\in J(x)$ and $y\in X\setminus J(x)$. However, if $T$ is invertible it is
easy to verify the following.
\end{remark}
\begin{proposition}
Let $T:X\rightarrow X$ be an invertible operator. Then $T^{-1}J(x)=J(x)$ for every $x\in X$.
\end{proposition}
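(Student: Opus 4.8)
The plan is to establish the equality by proving the two inclusions $T^{-1}J(x)\subseteq J(x)$ and $J(x)\subseteq T^{-1}J(x)$ separately, working directly from the sequential definition of the extended limit set. The one genuinely indispensable ingredient, and the only place where invertibility really enters, is the continuity of $T^{-1}$: since $T$ is a bounded bijection of the Banach space $X$, the bounded inverse theorem guarantees that $T^{-1}$ is again a bounded, hence continuous, linear operator. Once this is available the argument is essentially a one-line manipulation on each side.

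For $J(x)\subseteq T^{-1}J(x)$ I would argue as follows. Let $y\in J(x)$ and choose, as in the definition, a strictly increasing sequence $\{k_n\}$ of positive integers and a sequence $x_n\to x$ with $T^{k_n}x_n\to y$. Applying the continuous operator $T$ yields $T^{k_n+1}x_n\to Ty$, and since $\{k_n+1\}$ is again a strictly increasing sequence of positive integers, this shows $Ty\in J(x)$; equivalently $y=T^{-1}(Ty)\in T^{-1}J(x)$. (This is just the $T$-invariance of $J(x)$ already recorded in the preceding proposition, rederived in one line.)

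For the reverse inclusion $T^{-1}J(x)\subseteq J(x)$, take $y\in J(x)$ with the same kind of witnessing data $\{k_n\}$, $x_n\to x$, $T^{k_n}x_n\to y$, and now apply the continuous operator $T^{-1}$ to obtain $T^{k_n-1}x_n\to T^{-1}y$. The only point requiring a moment's care is that the new exponents $k_n-1$ have to be positive integers and form a strictly increasing sequence; this can fail only for the single index (if any) with $k_n=1$, so after discarding an initial term and relabeling we still have $x_n\to x$ together with a strictly increasing sequence of positive integers $\{k_n-1\}$ satisfying $T^{k_n-1}x_n\to T^{-1}y$. Hence $T^{-1}y\in J(x)$, which completes the proof.

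I do not anticipate any substantive obstacle here: the whole content is the continuity of $T^{-1}$ (which also explains the contrast with Remark~2.7, where surjectivity alone is not enough), plus the harmless index bookkeeping needed to keep the iteration exponent a positive integer. If one prefers, the inclusion $T^{-1}J(x)\subseteq J(x)$ can also be phrased as a limiting statement and deduced via the earlier closedness/stability lemma, but the direct computation above is the most transparent route.
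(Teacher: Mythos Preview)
Your proof is correct and follows essentially the same route as the paper: one inclusion is the $T$-invariance of $J(x)$ (which you rederive while the paper just cites Proposition~2.6), and the other is obtained by applying the continuous operator $T^{-1}$ to the witnessing sequence $T^{k_n}x_n\to y$ (respectively $\to Ty$) to drop the exponent by one. Your explicit mention of the bounded inverse theorem and the care with the possible index $k_1=1$ are small points the paper leaves implicit, but the arguments are the same.
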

\begin{proof}
By Proposition 2.6 it follows that $J(x)\subset T^{-1}J(x)$. Take $y\in T^{-1}J(x)$. There are a strictly increasing sequence $\{ k_{n}\}$ of positive integers and a
sequence $\{ x_n\}\subset X$ so that $x_{n}\rightarrow x$ and $T^{k_{n}}x_{n}\rightarrow Ty$, hence $T^{k_{n}-1}x_{n}\rightarrow y$.
\end{proof}

\begin{proposition}
Let $T:X\rightarrow X$ be an operator and $x,y\in X$. Then $y\in J^{+}(x)\,\,\mbox{if and only if}\,\, x\in
J^{-}(y)$.
\end{proposition}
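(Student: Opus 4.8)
The plan is to exploit the obvious symmetry between $T$ and $T^{-1}$ (recall that the sets $J^{\pm}$ are only defined when $T$ is invertible, which is the standing assumption in this statement), so that once the ``only if'' direction is settled the ``if'' direction comes for free.

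First I would prove that $y\in J^{+}(x)$ implies $x\in J^{-}(y)$. By definition of $J^{+}(x)$ there are a strictly increasing sequence $\{k_{n}\}$ of positive integers and a sequence $\{x_{n}\}\subset X$ with $x_{n}\to x$ and $T^{k_{n}}x_{n}\to y$. Put $y_{n}:=T^{k_{n}}x_{n}$, so that $y_{n}\to y$. Since $T$ is invertible, $(T^{-1})^{k_{n}}y_{n}=(T^{-1})^{k_{n}}T^{k_{n}}x_{n}=x_{n}\to x$. As $\{k_{n}\}$ is a strictly increasing sequence of positive integers, this says precisely that $x$ belongs to the extended limit set of $y$ under $T^{-1}$, i.e. $x\in J^{-}(y)$.

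For the converse I would simply apply the implication just proved to the invertible operator $T^{-1}$ in place of $T$: since $(T^{-1})^{-1}=T$, the extended limit set of $x$ under $(T^{-1})^{-1}=T$ is $J^{+}(x)$ and the extended limit set of $y$ under $T^{-1}$ is $J^{-}(y)$, so ``$x\in J^{-}(y)$ implies $y\in J^{+}(x)$'' is exactly the previously established implication with $T^{-1}$ playing the role of $T$. Alternatively one repeats the one-line substitution argument verbatim, setting $x_{n}:=(T^{-1})^{m_{n}}y_{n}$. I do not anticipate any genuine obstacle; the only point deserving a moment's care is that the same index sequence can be reused throughout, since it already consists of positive integers and no reindexing is needed.
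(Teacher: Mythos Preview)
Your proposal is correct and is essentially identical to the paper's own proof: both set $y_{n}=T^{k_{n}}x_{n}$ and observe that $(T^{-1})^{k_{n}}y_{n}=x_{n}\to x$ to obtain $x\in J^{-}(y)$. The paper leaves the converse entirely implicit (by symmetry), whereas you spell out that it follows by swapping $T$ with $T^{-1}$; this is a cosmetic difference only.
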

\begin{proof}
If $y\in J^{+}(x)$ there exist a strictly increasing sequence $\{ k_{n}\}$ of positive integers and a sequence $\{ x_n\}\subset X$ such that $x_{n}\rightarrow x$ and
$T^{k_{n}}x_{n}\rightarrow y$. Then $T^{-k_{n}}(T^{k_{n}}x_{n})=x_n\rightarrow x$, hence $x\in J^{-}(y)$.
\end{proof}

\begin{proposition}
Let $T:X\rightarrow X$ be an operator. If $T$ is power bounded then  $J(x)=L(x)$ for every $x\in X$.
\end{proposition}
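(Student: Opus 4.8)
The plan is to prove the two inclusions $L(x)\subseteq J(x)$ and $J(x)\subseteq L(x)$ separately, the first being trivial and valid for every operator, the second being where power boundedness enters. For the first inclusion, given $y\in L(x)$ with $T^{k_n}x\to y$ along some strictly increasing sequence $\{k_n\}$, I would simply take the constant sequence $x_n=x$; then $x_n\to x$ and $T^{k_n}x_n\to y$, so $y\in J(x)$. Hence $L(x)\subseteq J(x)$ holds unconditionally (this is already implicit in the paper).

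For the reverse inclusion, fix $y\in J(x)$ and choose, by definition, a strictly increasing sequence of positive integers $\{k_n\}$ and a sequence $\{x_n\}\subset X$ with $x_n\to x$ and $T^{k_n}x_n\to y$. Write $M=\sup_{k\ge 0}\|T^k\|<\infty$, which is finite since $T$ is power bounded. The key step is the elementary estimate
\begin{equation*}
\|T^{k_n}x-y\|\le \|T^{k_n}(x-x_n)\|+\|T^{k_n}x_n-y\|\le M\,\|x-x_n\|+\|T^{k_n}x_n-y\|,
\end{equation*}
whose right-hand side tends to $0$ as $n\to\infty$. Since $\{k_n\}$ is strictly increasing, this shows precisely that $y\in L(x)$, so $J(x)\subseteq L(x)$, and combined with the first inclusion we get $J(x)=L(x)$.

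I do not expect any genuine obstacle here: the whole argument is a single triangle-inequality estimate, and the only quantitative input is the uniform bound $\|T^{k_n}\|\le M$, which converts the convergence $x_n\to x$ into convergence of $T^{k_n}(x-x_n)$ to $0$ uniformly in $n$. The one point worth stating explicitly is that $L(x)\subseteq J(x)$ requires no hypothesis on $T$, so that the content of the proposition is entirely in the reverse inclusion.
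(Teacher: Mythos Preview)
Your proof is correct and essentially identical to the paper's own argument: both note that $L(x)\subset J(x)$ holds unconditionally and then, for $y\in J(x)$, use the triangle inequality $\|T^{k_n}x-y\|\le M\|x-x_n\|+\|T^{k_n}x_n-y\|$ with the power-boundedness constant $M$ to conclude $y\in L(x)$. The only cosmetic difference is that the paper first disposes of the trivial case $J(x)=\emptyset$, which your formulation handles implicitly.
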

\begin{proof}
Since $T$ is power bounded there exists a positive number $M$ such that $\| T^n\| \leq M$ for every positive
integer $n$. Fix a vector $x\in X$. If $J(x)=\emptyset $ there is nothing to prove. Therefore assume that
$J(x)\neq \emptyset$. Since the inclusion $L(x)\subset J(x)$ is always true, it suffices to show that
$J(x)\subset L(x)$. Take $y\in J(x)$. There exist a strictly increasing sequence $\{ k_{n}\}$ of positive
integers and a sequence $\{ x_n\}\subset X$ such that $x_{n}\rightarrow x$ and $T^{k_{n}}x_{n}\rightarrow y$.
Then we have $\| T^{k_n}x-y\| \leq \| T^{k_n}x-T^{k_n}x_n\| +\| T^{k_n}x_n-y\| \leq M\| x-x_n\| +\|
T^{k_n}x_n-y\|$  and letting $n$ goes to infinity to the above inequality, we get that $y\in L(x)$.
\end{proof}

\begin{lemma}
Let $T:X\rightarrow X$ be an operator. If $J(x)=X$ for some non-zero vector $x\in X$ then $J(\lambda x)=X$ for
every $\lambda \in \mathbb{C}$.

\end{lemma}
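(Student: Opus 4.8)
The plan is to exploit the linearity of $T$ together with the sequential characterization of $J$-sets, and to treat $\lambda\neq 0$ and $\lambda=0$ separately. First I would record the auxiliary inclusion
\[
\mu J(x)\subseteq J(\mu x)\qquad\text{for every }\mu\in\mathbb{C}.
\]
Indeed, if $y\in J(x)$ is witnessed by a strictly increasing sequence $\{k_n\}$ and vectors $x_n\to x$ with $T^{k_n}x_n\to y$, then $\mu x_n\to\mu x$ and, by linearity, $T^{k_n}(\mu x_n)=\mu\,T^{k_n}x_n\to\mu y$, so $\mu y\in J(\mu x)$. This is the only place where linearity of $T$ is used and it is entirely routine.

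For $\lambda\neq 0$ the conclusion is then immediate: applying the inclusion with $\mu=\lambda$ gives $J(\lambda x)\supseteq\lambda J(x)=\lambda X=X$, whence $J(\lambda x)=X$. Note this argument genuinely requires $\lambda\neq 0$, since $\lambda X=X$ fails for $\lambda=0$.

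The remaining case $\lambda=0$, i.e.\ proving $J(0)=X$, is where a little more is needed, and here I would invoke Lemma 2.5. Choose any sequence of nonzero scalars $\lambda_n\to 0$, so that $\lambda_n x\to 0$. Fix an arbitrary $y\in X$. By the previous paragraph $J(\lambda_n x)=X$, hence $y\in J(\lambda_n x)$ for every $n$; taking the constant sequence $y_n=y\to y$ and $x_n=\lambda_n x\to 0$, Lemma 2.5 yields $y\in J(0)$. Since $y\in X$ was arbitrary, $J(0)=X$, and the proof is complete.

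I do not expect a real obstacle here; the only point requiring attention is not to overlook the $\lambda=0$ case, which does not follow from the inclusion $\lambda J(x)\subseteq J(\lambda x)$ and instead rests on the continuity/closedness property packaged in Lemma 2.5.
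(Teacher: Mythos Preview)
Your proof is correct and follows essentially the same approach as the paper: the nonzero case is handled by linearity (the paper simply says ``it is easy to see,'' while you spell out the inclusion $\mu J(x)\subseteq J(\mu x)$), and the case $\lambda=0$ is obtained via Lemma 2.5 applied to $\lambda_n x\to 0$ with $y\in J(\lambda_n x)$ for each $n$. There is no substantive difference.
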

\begin{proof}
For $\lambda \in \mathbb{C}\setminus \{ 0\}$ it is easy to see that $J(\lambda x)=X$. It remains to show that $J(0)=X$. Fix a sequence of non-zero complex numbers $\{
\lambda_n \}$ converging to $0$ and take $y\in J(x)$. Then $y\in J(\lambda_nx)$ for every $n$ and since $\lambda_n\to 0$, Lemma 2.5 implies that $y\in J(0)$. Hence
$J(0)=X$.
\end{proof}

\begin{proposition}
Let $T:X\rightarrow X$ be an operator. Define the set $A=\{ x\in X: J(x)=X\}$. Then $A$ is a closed, connected
and $T(A)\subset A$.
\end{proposition}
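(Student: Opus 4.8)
The statement splits into three independent claims---that $A$ is closed, that $T(A)\subset A$, and that $A$ is connected---and I would handle them separately. Closedness is immediate from Lemma 2.5: if $\{x_n\}\subset A$ and $x_n\to x$, fix an arbitrary $y\in X$; since $J(x_n)=X$ for every $n$, the constant sequence $y_n:=y$ satisfies $y_n\in J(x_n)$ for all $n$ and $y_n\to y$, so Lemma 2.5 yields $y\in J(x)$. As $y$ was arbitrary, $J(x)=X$ and $x\in A$, so $A$ is closed.

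For $T(A)\subset A$, I would first observe that $J(x)\subset J(Tx)$ holds for \emph{every} $x\in X$. Indeed, given $y\in J(x)$ choose a strictly increasing sequence $\{k_n\}$ of positive integers and a sequence $x_n\to x$ with $T^{k_n}x_n\to y$; deleting finitely many terms we may assume $k_n\ge 2$, so that $\{k_n-1\}$ is again a strictly increasing sequence of positive integers. Since $T$ is continuous, $Tx_n\to Tx$, and $T^{\,k_n-1}(Tx_n)=T^{k_n}x_n\to y$, hence $y\in J(Tx)$. Consequently, if $x\in A$ then $X=J(x)\subset J(Tx)\subset X$, i.e. $Tx\in A$.

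The connectedness is the only part requiring an idea, and the idea is to use Lemma 2.11 to see that $A$ is star-shaped about the origin. If $A=\emptyset$ or $A=\{0\}$ there is nothing to prove, so suppose $A$ contains some $x_0\ne 0$. By Lemma 2.11, $J(\lambda x_0)=X$ for all $\lambda\in\mathbb{C}$; taking $\lambda=0$ gives $0\in A$. Moreover, for any nonzero $x\in A$, Lemma 2.11 gives $J(tx)=X$ for every $t\in[0,1]$, so the segment $\{tx:0\le t\le 1\}$ lies in $A$ (trivially so when $x=0$). Thus for each $x\in A$ the continuous path $t\mapsto tx$, $t\in[0,1]$, joins $0$ to $x$ inside $A$, so $A$ is path-connected and hence connected. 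The main obstacle, modest as it is, is precisely recognizing that Lemma 2.11 forces $A$ to be a union of complex lines through $0$; once that is seen, star-shapedness and connectedness follow at once, and only the trivial cases $A=\emptyset$ and $A=\{0\}$ need separate mention.
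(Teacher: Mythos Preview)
Your proof is correct and follows essentially the same route as the paper: Lemma~2.5 for closedness, Lemma~2.11 for connectedness via star-shapedness about the origin. One small point worth noting: for $T$-invariance the paper simply invokes ``the $T$-invariance of $J(x)$'' (i.e., Proposition~2.6, which gives $T(J(x))\subset J(x)$), but that inclusion alone does not obviously yield $J(Tx)=X$ from $J(x)=X$; your explicit observation that $J(x)\subset J(Tx)$ is the cleaner justification and is really what is needed. You are also slightly more careful than the paper in treating the trivial cases $A=\emptyset$ and $A=\{0\}$, since Lemma~2.11 is stated only for nonzero $x$.
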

\begin{proof}
The $T$-invariance follows immediately from the $T$-invariance of $J(x)$. By Lemma 2.5 we conclude that $A$ is closed. Let $x\in A$. Lemma 2.11 implies that for every
$\lambda \in \mathbb{C}$, $J(0)=J(\lambda x)=X$, hence $A$ is connected.
\end{proof}

\section{A characterization of hypercyclic operators through $J$-sets}

The following characterization of hypercyclic operators appears more or less in \cite{GE}. However we sketch the
proof for the purpose of completeness.

\begin{theorem}
Let $T:X\rightarrow X$ be an operator acting on a separable Banach space $X$. The following are equivalent.
\begin{enumerate}
\item[(i)]    $T$ is hypercyclic;

\item[(ii)]   For every $x\in X$ it holds that $J(x)=X$;

\item[(iii)]  The set $A=\{ x\in X: J(x)=X\}$ is dense in $X$;

\item[(iv)]   The set $A=\{ x\in X: J(x)=X\}$ has non-empty interior.
\end{enumerate}
\end{theorem}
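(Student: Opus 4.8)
The plan is to prove the cycle of implications $(i)\Rightarrow(ii)\Rightarrow(iii)\Rightarrow(iv)\Rightarrow(i)$, only the last of which is substantial. For $(i)\Leftrightarrow(ii)$ I would just combine two things already available: the well-known characterization recalled in the Introduction, namely that on a separable Banach space $T$ is hypercyclic if and only if for every pair of non-empty open sets $U,V$ there is a positive integer $n$ with $T^nU\cap V\neq\emptyset$; and Remark 2.3, which says that this transitivity condition is exactly the assertion that $J(x)=X$ for every $x\in X$. So $(i)$ and $(ii)$ are two formulations of the same statement.

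The implications $(ii)\Rightarrow(iii)\Rightarrow(iv)$ are purely formal: $(ii)$ says $A=X$, which is dense; and if $A$ is dense, then, since $A$ is closed by Proposition 2.12, $A=\overline A=X$, so $A$ has non-empty interior. (In fact $(ii)$, $(iii)$ and $(iv)$ are all equivalent to $A=X$ precisely because $A$ is closed; the whole content of the theorem is that the seemingly weaker hypothesis $(iv)$ already forces $A=X$.)

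The heart of the matter is $(iv)\Rightarrow(i)$, and by the reformulations above it suffices to prove $A=X$. I would first record what comes for free: $A$ is closed and $T$-invariant (Proposition 2.12), $0\in A$ as soon as $A\neq\emptyset$ (Lemma 2.11), and $A$ is a balanced cone, since applying the definition of $J$ to the perturbations $\lambda x_n$ yields $J(\lambda x)=\lambda J(x)$ for $\lambda\neq0$, hence $\lambda A=A$ for every $\lambda\neq0$. Fix $x_0\in A^o$ and $\epsilon>0$ with $B(x_0,\epsilon)\subseteq A$; since this ball is open it in fact lies in $A^o$, and $A^o$ is itself a balanced cone. Scaling already gives a lot: for each $x\in X$ and each $t>\|x\|/\epsilon$ the vector $x/t+x_0$ lies in $B(x_0,\epsilon)\subseteq A^o$, whence $x+tx_0=t(x/t+x_0)\in tA^o=A^o$. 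It then remains to descend from ``$x+tx_0\in A$ for all large $t$'' to ``$x\in A$'', equivalently to show $0\in A^o$, after which $A\supseteq\bigcup_{\lambda\neq0}\lambda B(0,\epsilon)=X$. Here one genuinely needs the dynamics, since a closed connected balanced cone with non-empty interior need not be the whole space, so no purely topological argument can finish.

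The approach I would take for this last step is, given $x\in X$ and a target $y\in X$: use $J(x_0)=X$ to produce $x_n\to x_0$ and $k_n\uparrow$ with $T^{k_n}x_n\to y$, pass to the perturbed sequence $x_n-x_0+x\to x$, and try to force convergence of $T^{k_n}(x_n-x_0+x)=T^{k_n}x_n+T^{k_n}(x-x_0)$ by controlling the correction term $T^{k_n}(x-x_0)$ through the auxiliary vectors $x-x_0$ and $x+tx_0$ (all of which, suitably rescaled, lie in $A$), finally invoking Lemma 2.5 to pass to the limit and conclude $y\in J(x)$. The step I expect to be the real obstacle is precisely this synchronisation — aligning the iterates $T^{k_n}$ carrying $x_n$ to $y$ with the iterates acting on the perturbation $x-x_0$ — the familiar difficulty being that $T$ need not be an open map, so one cannot pull back freely along orbits; it is exactly here that the linear-dynamical structure, rather than the formal properties of the set $A$, has to be used in earnest.
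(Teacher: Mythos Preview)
Your treatment of $(i)\Leftrightarrow(ii)$ via topological transitivity and Remark~2.3 is correct and, if anything, cleaner than the paper's version. The formal steps $(ii)\Rightarrow(iii)\Rightarrow(iv)$ are also fine.

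The gap is in $(iv)\Rightarrow(ii)$. You have assembled exactly the right structural facts about $A$ --- closed, $T$-invariant, a balanced cone --- but you then try to prove $J(x)=X$ for an arbitrary $x$ by building approximating orbits based at $x$ out of those based at $x_0$, and you correctly diagnose that the resulting synchronisation problem is a genuine obstacle. It is, and the paper does not try to overcome it. The missing idea is to reverse the roles of source and target: instead of fixing an arbitrary $x$ and trying to hit every $y$ from nearby, fix the interior point $x_0\in A^o$ and show that every $y\in X$ lies in $A$. Since $J(x_0)=X$, there exist $x_n\to x_0$ and $k_n\uparrow$ with $T^{k_n}x_n\to y$. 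Because $x_0\in A^o$, one may assume $x_n\in A$ for all $n$; by $T$-invariance $T^{k_n}x_n\in A$; and by closedness $y\in\overline{A}=A$. Thus $A=X$ in three lines, with no synchronisation needed.

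In other words, the cone and scaling arguments you set up are unnecessary; the two properties of $A$ from Proposition~2.12 (closed, $T$-invariant) combined with the single dynamical input $J(x_0)=X$ already do all the work, once you realise that the sequence $\{T^{k_n}x_n\}$ witnessing $y\in J(x_0)$ is itself a sequence in $A$ converging to $y$.
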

\begin{proof}
We first prove that (i) implies (ii). Let $x,y\in X$. Since the set of hypercyclic vectors is $G_{\delta}$ and dense in $X$ there exist a sequence $\{ x_{n}\}$ of
hypercyclic vectors and a strictly increasing sequence $\{ k_{n}\}$ of positive integers such that $x_{n}\rightarrow x$ and $T^{k_{n}}x_{n}\rightarrow y$ as $n\to
\infty$. Hence $y\in J(x)$. That (ii) implies (iii) is trivial. A consequence of Lemma 2.5 is that (iii) gives (ii). Next we show that (iv) implies (ii). Fix $x\in
A^{o}$ and consider $y\in X$ arbitrary. Then $y\in J(x)=X$, hence there exist a sequence $\{ x_{n}\}\subset X$ and a strictly increasing sequence $\{ k_{n}\}$ of
positive integers such that $x_{n} \rightarrow x$ and $T^{k_{n}}x_{n}\rightarrow y$. Since $x\in A^{o}$ without loss of generality we may assume that $x_{n}\in A$ for
every $n$. Moreover $A$ is $T$-invariant, hence $T^{k_{n}}x_{n}\in A$ for every $n$. Since $T^{k_{n}}x_{n}\rightarrow y$ and $A$ is closed we conclude that $y\in A$.
Let us now prove that (ii) implies (i). Fix $\{ x_{j} \}$ a countable dense set of $X$. Define the sets $ E(j,s,n)=\{ x\in X: \| T^{n}x-x_{j}\| <\frac{1}{s} \}$ for
every $j,s= 1,2,\dots$ and every $n=0,1,2,\dots$.  In view of Baire's Category Theorem and the well known set theoretical description of hypercyclic vectors through
the sets $E(j,s,n)$, it suffices to show that the set $\bigcup_{n=0}^{\infty}E(j,s,n)$ is dense in $X$ for every $j,s$. Indeed, let $y\in X$, $\epsilon>0$, $j,s$ be
given. Since $J(y)=X$, there exist $x\in X$ and $n\in \mathbb{N}$ such that $\| x-y\| <\epsilon$ and $\| T^{n}x-x_{j} \| <1/s$.
\end{proof}

The following lemma -see also Corollary 3.4- which is of great importance in the present paper, gives information about the spectrum of the adjoint $T^*$ of an
operator $T:X\to X$ provided there is a vector $x\in X$ whose extended limit set $J(x)$ has non-empty interior. The corresponding result for hypercyclic operators has
been proven by P. Bourdon in \cite{Bourdon}.

\begin{lemma}
Let $T:X\rightarrow X$ be an operator acting on a complex or real Banach space $X$. Suppose there exists a vector $x\in X$ such that $J(x)$ has non-empty interior and
$x $ is cyclic for $T$. Then for every non-zero polynomial $P$ the operator $P(T)$ has dense range. In particular the point spectrum $\sigma_p(T^*)$ of $T^*$ (the
adjoint operator of $T$) is empty, i.e. $\sigma_p(T^*)=\emptyset$.
\end{lemma}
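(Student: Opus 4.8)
The plan is to establish that $\sigma_p(T^{*})=\emptyset$ first: as I note at the end, this is actually equivalent to the stated dense-range property, the lemma merely reversing the order. I argue over $\mathbb{C}$ and reduce a real Banach space to this case by complexification at the very end. So suppose, towards a contradiction, that $T^{*}\phi=\lambda\phi$ for some scalar $\lambda$ and some $\phi\in X^{*}$ with $\phi\neq 0$. The single place where cyclicity of $x$ enters is to force $\phi(x)\neq 0$: since $\phi(T^{n}x)=\langle x,(T^{*})^{n}\phi\rangle=\lambda^{n}\phi(x)$ for every $n\geq 0$, the assumption $\phi(x)=0$ would make $\phi$ annihilate $Orb(T,x)$, hence its linear span, hence (by continuity) $X=\overline{\mathrm{span}}\,Orb(T,x)$, so $\phi=0$, a contradiction. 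Put $c=\phi(x)\neq 0$.

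Now I invoke the hypothesis that $J(x)$ has non-empty interior. Fix any $z\in J(x)$ (the set is non-empty) together with sequences $x_{n}\to x$ and $k_{n}\uparrow\infty$ such that $T^{k_{n}}x_{n}\to z$. Applying $\phi$ and using its continuity,
\begin{equation*}
\phi(z)=\lim_{n}\langle T^{k_{n}}x_{n},\phi\rangle=\lim_{n}\langle x_{n},(T^{*})^{k_{n}}\phi\rangle=\lim_{n}\lambda^{k_{n}}\phi(x_{n}),
\end{equation*}
while $\phi(x_{n})\to c\neq 0$. If $|\lambda|>1$ then $|\lambda^{k_{n}}\phi(x_{n})|\to\infty$, which is impossible since the sequence converges to the finite scalar $\phi(z)$; hence $|\lambda|\leq 1$. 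Then $|\phi(z)|=|c|\cdot\lim_{n}|\lambda|^{k_{n}}$ for every $z\in J(x)$, the limit being $0$ if $|\lambda|<1$ and $|c|$ if $|\lambda|=1$. Choose an open ball $B(y_{0},r)\subset J(x)$. If $|\lambda|<1$, then $\phi$ vanishes on $B(y_{0},r)$, so $\phi(w)=\phi(y_{0}+w)-\phi(y_{0})=0$ whenever $\|w\|<r$, whence $\phi=0$; contradiction. If $|\lambda|=1$, then $|\phi|\equiv|c|$ on $B(y_{0},r)$, and for fixed $w$ with $\|w\|<r$ the real function $t\mapsto|\phi(y_{0})+t\,\phi(w)|^{2}=|\phi(y_{0})|^{2}+2t\,\mathrm{Re}\bigl(\overline{\phi(y_{0})}\,\phi(w)\bigr)+t^{2}|\phi(w)|^{2}$ is constant near $t=0$, forcing $\phi(w)=0$; again $\phi=0$, a contradiction. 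Hence $\sigma_p(T^{*})=\emptyset$.

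It remains to pass from $\sigma_p(T^{*})=\emptyset$ to the dense-range statement. Factor a non-zero polynomial over $\mathbb{C}$ as $P(z)=c_{0}\prod_{i=1}^{d}(z-\mu_{i})$ with $c_{0}\neq 0$. Each $T-\mu_{i}I$ has dense range, because $\overline{(T-\mu_{i}I)X}=X$ is equivalent to $\ker(T-\mu_{i}I)^{*}=\ker(T^{*}-\mu_{i}I)=\{0\}$, which holds since $\mu_{i}\notin\sigma_p(T^{*})$; and a composition $AB$ of operators with dense range again has dense range (if $A$ is continuous and $\overline{BX}=X$, then $AX=A(\overline{BX})\subset\overline{A(BX)}=\overline{ABX}$, whence $X=\overline{AX}\subset\overline{ABX}$). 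Therefore $P(T)$ has dense range, and the ``in particular'' is exactly the special case $P(z)=z-\lambda$. For a real $X$ one applies the above to the complexification $\widetilde{X}$: the vector $x$ is still cyclic for $\widetilde{T}$, the set $J(x)$ still contains the real open ball $B(y_{0},r)$, and a non-zero complex-linear functional on $\widetilde{X}$ cannot vanish on a ball of $X$, so the eigenvalue argument above carries over unchanged.

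The computations are all short; the one point requiring a little care is the case $|\lambda|=1$, where one must convert ``$|\phi|$ constant on an open ball'' into ``$\phi=0$'', together with the bookkeeping that cyclicity is used precisely once, namely to guarantee $\phi(x)\neq 0$.
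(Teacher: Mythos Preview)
Your proof is correct, and in the complex case it is essentially the same as the paper's: both assume a non-zero eigenfunctional $\phi$ of $T^{*}$, evaluate $\phi(T^{k_n}x_n)=\lambda^{k_n}\phi(x_n)$, and split on $|\lambda|$. The organisation differs only cosmetically: the paper invokes cyclicity solely in the case $|\lambda|>1$ (to pass from $\phi(x)=0$ to $\phi=0$), whereas you front-load it to secure $\phi(x)\neq 0$ and then dispose of $|\lambda|>1$ by divergence; your quadratic expansion for $|\lambda|=1$ makes explicit what the paper abbreviates as ``contradiction since $x^{*}$ is surjective.''

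The real case, however, is handled by genuinely different means. The paper reduces to irreducible real quadratics $P(t)=t^{2}-2\mathrm{Re}(w)t+|w|^{2}$, invokes the argument of B\`es to produce a $2\times 2$ real matrix $A$ with $J_{A^{t}}((x^{*}(Tx),x^{*}(x))^{t})=\mathbb{R}^{2}$, and then appeals to the finite-dimensional Proposition~5.5 to force $x^{*}(Tx)=x^{*}(x)=0$. Your route via complexification is more self-contained: cyclicity of $x$ for $\widetilde{T}$ is immediate, and the complex eigenfunctional argument only needs that $\phi$ vanish (or have constant modulus) on a \emph{real} ball $B_{X}(y_{0},r)\subset J_{T}(x)\subset J_{\widetilde{T}}(x)$ to conclude $\phi|_{X}=0$ and hence $\phi=0$. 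One small caveat: the argument does not carry over literally ``unchanged,'' since $B_{X}(y_{0},r)$ has empty interior in $\widetilde{X}$; but you correctly isolate the needed substitute, namely that a non-zero complex-linear functional on $\widetilde{X}$ cannot vanish (nor have constant non-zero modulus) on an open ball of $X$, and your quadratic computation for real $t$ establishes exactly this. The upshot is a shorter treatment of the real case that avoids the external references to \cite{Bes} and the later Proposition~5.5.
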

\begin{proof}
Assume first that $X$ is a complex Banach space. Since $P(T)$ can be decomposed in the form $P(T)=\alpha(T-{\lambda}_{1} I)(T-{\lambda}_{2}I)\ldots(T-{\lambda}_{k}I)$
for some $\alpha,{\lambda}_{i} \in\mathbb{C}$, $i=1,\ldots,k$, where $I$ stands for the identity operator, it suffices to show that $T-\lambda I$ has dense range for
any $\lambda \in\mathbb{C}$. If not, there exists a non-zero linear functional $x^{*}$ such that $x^{*}((T-\lambda I)(x))=0$ for every $ x\in X$. The last implies
that $x^{*}(T^{n}x)={\lambda}^{n}x^{*}(x)$ for every $x\in X$ and every $n$ non-negative integer. Take $y$ in the interior of $J(x)$. Then there exist a sequence $\{
x_{n} \} \subset X$ and a strictly increasing sequence $\{ k_{n} \}$ of positive integers such that $x_{n}\rightarrow x$ and $T^{k_{n}} x_{n}\rightarrow y$ as
$n\rightarrow+\infty$. Suppose first that $|\lambda|<1$. Observe that $x^{\ast}(T^{k_{n}} x_{n})={\lambda}^{k_{n}}x^{\ast}(x_{n})$ and letting $n\rightarrow+\infty$
we arrive at $x^{\ast}(y)=0$. Since the functional $x^{\ast}$ is zero on an open subset of $X$ must be identically zero on $X$, which is a contradiction. Working for
$|\lambda|=1$ as before, it is easy to show that for every $y$ in the interior of $J(x)$, $x^{\ast}(y)=\mu x^{\ast}(x)$ for some $\mu\in\mathbb{C}$ with $|\mu|=1$,
which is again a contradiction since $x^{\ast}$ is surjective. Finally we deal with the case $|\lambda|>1$. \textit{At this part of the proof we shall use the
hypothesis that $x$ is cyclic}. Letting $n\rightarrow+\infty$ to the next relation, $x^{\ast}(x_{n})=\frac{1}{{\lambda}^{k_{n}}}x^{\ast}(T^{k_{n}}x_{n})$, it is plain
that $x^{\ast}(x)=0$ and therefore $x^{\ast}(T^{n}x)=0$ for every $n$ non-negative integer. The last implies that $x^{\ast}(P(T)x)=0$ for every $P$ non-zero
polynomial and since $x$ is cyclic the linear functional $x^{\ast}$ vanishes everywhere, which gives a contradiction. It remains to handle the real case. For that it
suffices to consider the case where $P$ is an irreducible and monic polynomial of the form $P(t)=t^2-2Re(w)t+|w|^2$ for some non-real complex number $w$. Assume that
$P(T)$ does not have dense range. Then there exists a non-zero $x^{\ast} \in Ker(P(T)^{\ast})$. Following the proof of the main result in \cite{Bes}, there exists a
real $2\times 2$ matrix $A$ such that $J_{A^t}((x^*(Tx), x^*(x))^t)=\mathbb{R}^2$, where the symbol $A^t$ stands for the transpose of $A$. By Proposition 5.5 (which
hold in the real case as well) we get $x^*(Tx)=x^*(x)=0$. The last implies that $x^*(Q(T)x)=0$ for every real polynomial $Q$. Since $x$ is cyclic we conclude that
$x^*=0$ which is a contradiction. This completes the proof of the lemma.
\end{proof}

\begin{theorem}
Let $T:X\rightarrow X$ be an operator acting on a separable Banach space $X$. Then $T$ is hypercyclic if and only if there exists a cyclic vector $x\in X$ for $T$
such that $J(x)=X$.
\end{theorem}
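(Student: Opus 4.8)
The plan is to prove the two implications separately, pushing as much as possible onto results already established. The forward direction is immediate: if $T$ is hypercyclic, then Theorem 3.1 gives $J(z)=X$ for every $z\in X$, and any hypercyclic vector is in particular cyclic, so we may take $x$ to be any hypercyclic vector.

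For the converse, suppose $x$ is cyclic for $T$ and $J(x)=X$. The goal will be to show that the set $A=\{z\in X:J(z)=X\}$ is dense in $X$; once this is done, Theorem 3.1 (the implication (iii)$\Rightarrow$(i)) yields that $T$ is hypercyclic. The mechanism is that the property $J(x)=X$ propagates along the orbit, in the sense that $J(P(T)x)=X$ for every polynomial $P$. To get the inclusion $P(T)X\subseteq J(P(T)x)$, I would take an arbitrary $y\in X$; since $y\in J(x)$ there are $x_n\to x$ and a strictly increasing sequence $\{k_n\}$ with $T^{k_n}x_n\to y$, and applying the bounded operator $P(T)$, which commutes with $T$, gives $P(T)x_n\to P(T)x$ and $T^{k_n}(P(T)x_n)=P(T)(T^{k_n}x_n)\to P(T)y$, so $P(T)y\in J(P(T)x)$.

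Next I would invoke Lemma 3.2: since $J(x)$ has non-empty interior (it is all of $X$) and $x$ is cyclic, every non-zero polynomial $P$ yields an operator $P(T)$ with dense range, i.e. $\overline{P(T)X}=X$. Because $J(P(T)x)$ is closed (Proposition 2.6), the inclusion from the previous step then forces $J(P(T)x)=X$ for every non-zero polynomial $P$; the remaining case $P=0$ is the assertion $J(0)=X$, which follows from $J(x)=X$ by Lemma 2.11. Hence $A$ contains $\{P(T)x:P\text{ a polynomial}\}$, which is exactly the linear span of $Orb(T,x)$ and therefore dense in $X$ since $x$ is cyclic. Thus $A$ is dense, and Theorem 3.1 completes the argument.

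I do not expect a serious obstacle here; the point to get right is that Lemma 3.2 is precisely the ingredient that upgrades the inclusion $P(T)X\subseteq J(P(T)x)$ to the equality $J(P(T)x)=X$, and that cyclicity of $x$ is used twice — once to provide the hypothesis of Lemma 3.2 and once to guarantee density of the span of $Orb(T,x)$. The only minor care needed is to dispose of the zero polynomial (equivalently, the vector $0$) via Lemma 2.11 rather than via the dense-range statement.
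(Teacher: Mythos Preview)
Your proof is correct and follows essentially the same route as the paper: establish $P(T)(J(x))\subset J(P(T)x)$, use Lemma 3.2 to get that $P(T)$ has dense range, conclude $J(P(T)x)=X$ for all non-zero polynomials $P$, and then invoke cyclicity of $x$ together with Theorem 3.1. The only cosmetic difference is that you treat the zero polynomial separately via Lemma 2.11, whereas the paper simply works with non-zero polynomials (which already yield a dense set of vectors in $A$).
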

\begin{proof}
We need only to prove that if $x\in X$ is a cyclic vector for $T$ and $J(x)=X$ then $T$ is hypercyclic. Take any non-zero polynomial $P$. It is easy to check that
$P(T)(J(x))\subset J(P(T)x). $ By the previous lemma it follows that $P(T)$ has dense range and since $J(x)=X$ we conclude that $X=\overline{ P(T)(X) }\subset
J(P(T)x) $. Therefore $J(P(T)x)=X$ for every non-zero polynomial $P$. The fact that $x$ is a cyclic vector it now implies that there exists a dense set $D$ in $X$ so
that $J(y)=X$ for every $y\in D$. Hence, in view of Theorem 3.1, $T$ is hypercyclic.
\end{proof}

\begin{corollary}
Let $T:X\rightarrow X$ be an operator. Suppose there exists a vector $x\in X$ such that $J(x)$ has non-empty interior. Then for every $\lambda\in\mathbb{C}$ with $|
\lambda | \leq 1$ the operator $T-\lambda I$ has dense range.
\end{corollary}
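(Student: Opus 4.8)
The plan is to observe that Corollary 3.4 is nothing more than the portion of the proof of Lemma 3.2 which handles the putative eigenvalues $\lambda$ of $T^{*}$ lying in the closed unit disc, and that this portion never uses the cyclicity of $x$: only the case $|\lambda|>1$ appealed to cyclicity. So the strategy is to repeat that argument under the present, weaker hypothesis, namely that $J(x)$ merely has non-empty interior (with no cyclicity assumed).

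Concretely I would argue by contradiction. Fix $\lambda\in\mathbb{C}$ with $|\lambda|\le 1$ and assume $\overline{(T-\lambda I)(X)}\neq X$. Then there is a non-zero $x^{*}\in X^{*}$ vanishing on the range of $T-\lambda I$, so $x^{*}(Tz)=\lambda x^{*}(z)$ for all $z\in X$, whence $x^{*}(T^{n}z)=\lambda^{n}x^{*}(z)$ for every $z\in X$ and every non-negative integer $n$. Now choose any $y$ in the (non-empty, open) set $J(x)^{o}$; by definition of $J(x)$ there are a sequence $x_{n}\to x$ and a strictly increasing sequence $\{k_{n}\}$ of positive integers with $T^{k_{n}}x_{n}\to y$. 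Applying $x^{*}$ gives $x^{*}(T^{k_{n}}x_{n})=\lambda^{k_{n}}x^{*}(x_{n})$, and I would let $n\to\infty$.

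If $|\lambda|<1$, the right-hand side tends to $0$, since $\lambda^{k_{n}}\to 0$ and $\{x^{*}(x_{n})\}$ is bounded, while the left-hand side tends to $x^{*}(y)$; hence $x^{*}(y)=0$ for every $y\in J(x)^{o}$. A continuous linear functional vanishing on a non-empty open set is identically zero, contradicting $x^{*}\neq 0$. If $|\lambda|=1$, then $|x^{*}(y)|=\lim_{n}|\lambda^{k_{n}}x^{*}(x_{n})|=\lim_{n}|x^{*}(x_{n})|=|x^{*}(x)|$, a quantity independent of $y\in J(x)^{o}$. But a non-zero bounded linear functional is surjective and open onto the scalar field, so $x^{*}(J(x)^{o})$ is a non-empty open subset of $\mathbb{C}$, which cannot be contained in the circle $\{\mu:|\mu|=|x^{*}(x)|\}$ (nor in $\{0\}$, in case $x^{*}(x)=0$); this is again a contradiction. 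Thus $T-\lambda I$ has dense range for all $|\lambda|\le 1$. For a real scalar field the same computation applies verbatim to real $\lambda$ with $|\lambda|\le 1$.

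I do not expect a genuine obstacle here: the essential point is the bookkeeping observation that cyclicity entered the proof of Lemma 3.2 only in the range $|\lambda|>1$. The single place asking for a bit of care is the borderline case $|\lambda|=1$, where one cannot conclude that $x^{*}$ vanishes on an open set and must instead use the openness of a non-zero functional (equivalently, the fact that a linear functional of constant modulus on a non-empty open set must be zero).
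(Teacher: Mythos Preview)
Your proposal is correct and follows exactly the approach the paper intends: Corollary 3.4's proof in the paper is simply ``See the proof of Lemma 3.2,'' and you have accurately extracted and reproduced the $|\lambda|<1$ and $|\lambda|=1$ cases of that proof, noting (as the paper implicitly does) that cyclicity enters only when $|\lambda|>1$. Your handling of the boundary case $|\lambda|=1$ via the constancy of $|x^{*}(y)|$ on the open set $J(x)^{o}$ is equivalent to the paper's observation that $x^{*}(y)=\mu\,x^{*}(x)$ with $|\mu|=1$ contradicts the surjectivity of $x^{*}$.
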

\begin{proof}
See the proof of Lemma 3.2.
\end{proof}

\begin{remark}
At this point we would like to comment on Theorem 3.3. First of all under the hypothesis that $x$ is a cyclic vector for $T$ and $J(x)=X$ one cannot get a stronger
conclusion than $T$ is hypercyclic. In particular it is not true in general that $x$ is a hypercyclic vector. To see this, take $T=2B$ where $B$ is the backward shift
operator acting on the space of square summable sequences $l^2(\mathbb{N})$ over $\mathbb{C}$. In \cite{Fe2} Feldman showed that for a given positive number
$\epsilon$ there exists a vector $x\in l^2(\mathbb{N})$ such that the set $Orb(2B,x)$ is $\epsilon$-dense in $l^2(\mathbb{N})$ (this means that for every $y\in
l^2(\mathbb{N})$ there exists a positive integer $n$ such that $T^{n}x$ is $\epsilon$-close to $y$), but $x$ is not hypercyclic for $2B$. It is straightforward to
check that $x$ is supercyclic for $2B$ and hence it is cyclic. In addition $J(x)=l^2(\mathbb{N})$ since $2B$ is hypercyclic (see Theorem 3.1).
\end{remark}

\begin{remark}
Let us now show that the hypothesis $x$ is cyclic in Theorem 3.3 cannot be omitted. Let $B:l^2(\mathbb{N})\rightarrow l^2(\mathbb{N})$ be the backward shift operator.
Consider the operator $ T=2I\oplus 2B:\mathbb{C}\oplus l^2(\mathbb{N})\rightarrow \mathbb{C}\oplus l^2(\mathbb{N}), $ where $I$ is the identity operator acting on
$\mathbb{C}$. It is obvious that $2I\oplus 2B$ is not a hypercyclic operator. However we shall show that for every hypercyclic vector $y\in l^2(\mathbb{N})$ for $2B$
it holds that $J(0\oplus y)=\mathbb{C}\oplus l^2(\mathbb{N})$. Therefore there exist (non-cyclic) non-zero vectors $x\in \mathbb{C}\oplus l^2(\mathbb{N})$ with
$J(x)=\mathbb{C}\oplus l^2(\mathbb{N})$ and $T$ is not hypercyclic. Indeed, fix a hypercyclic vector $y\in l^2(\mathbb{N})$ for $2B$ and let $\lambda\in\mathbb{C}$,
$w\in l^2(\mathbb{N})$. There exists a strictly increasing sequence of positive integers $\{ k_n \}$ such that $T^{k_n}y\rightarrow w$. Define $
x_n=\frac{\lambda}{2^{k_n}}\oplus y $. Then $ x_n\rightarrow 0\oplus y$ and $T^{k_n}x_n\rightarrow \lambda\oplus w$. Hence, $J(0\oplus y)=\mathbb{C}\oplus
l^2(\mathbb{N})$.
\end{remark}

\section{An extension of Bourdon-Feldman's theorem}
In this section we establish an extension of the following striking result due to Bourdon and Feldman \cite{BF}: \textit{if $X$ is a separable Banach space, $T:X\to
X$ an operator and for some vector $x\in X$ the orbit $Orb(T,x)$ is somewhere dense then $\overline{Orb(T,x)}=X$.} This theorem was an answer to a question raised by
Peris in \cite{P}. We shall prove the following theorem.

\begin{theorem}
Let $x$ be a cyclic vector for $T$. If $J(x)^o\neq \emptyset$ then $J(x)=X$.
\end{theorem}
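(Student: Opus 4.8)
The plan is to leverage Theorem 3.3 and Lemma 3.2 together with the key relation $P(T)(J(x)) \subset J(P(T)x)$ that was already exploited in the proof of Theorem 3.3. First I would observe that since $x$ is cyclic and $J(x)^o \neq \emptyset$, Lemma 3.2 applies directly and tells us that for every non-zero polynomial $P$ the operator $P(T)$ has dense range. In particular each translate $T - \lambda I$ has dense range for all $\lambda \in \mathbb{C}$ (Corollary 3.4 gives this for $|\lambda| \le 1$, but Lemma 3.2 itself gives it for all $\lambda$). The strategy is then to show that the set $A = \{ y \in X : J(y) = X \}$, which is closed, connected and $T$-invariant by Proposition 2.12, actually contains a dense subset of $X$; once we know $A$ is dense, Lemma 2.5 upgrades this to $A = X$, so in particular $J(x) = X$.

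The engine for producing a dense set inside $A$ is the fact that $J(x)$ has non-empty interior. Pick $y_0$ in the interior of $J(x)$ and an open ball $B(y_0, r) \subset J(x)$. For any non-zero polynomial $P$, the inclusion $P(T)(J(x)) \subset J(P(T)x)$ combined with the density of the range of $P(T)$ shows that $J(P(T)x)$ is dense in $X$; but $J(P(T)x)$ is closed (Proposition 2.6), hence $J(P(T)x) = X$. Therefore $P(T)x \in A$ for every non-zero polynomial $P$. Since $x$ is cyclic, the vectors $\{ P(T)x : P \text{ a polynomial} \}$ are dense in $X$, and removing the single vector $0 = 0 \cdot x$ from consideration does not destroy density (if $x \neq 0$ and $X$ is nontrivial, non-zero polynomials still give a dense set; and in fact $0 \in A$ anyway by Lemma 2.11 since $A \neq \emptyset$). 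Hence $A$ is dense, so $A = X$, and in particular $J(x) = X$.

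The one point that needs a little care — and what I expect to be the mildest of the obstacles — is making sure the hypotheses of Lemma 3.2 are genuinely available: Lemma 3.2 requires exactly ``$J(x)$ has non-empty interior and $x$ is cyclic,'' which are precisely our hypotheses, so this is immediate and there is no circularity (Lemma 3.2 was proved without reference to Theorem 4.1). A second small point is that the argument above essentially reproves a piece of Theorem 3.3; to keep things clean I would simply invoke Theorem 3.3 at the end: we have shown there is a cyclic vector, namely $x$ itself, with $J(x) = X$, so $T$ is hypercyclic, and then by Theorem 3.1 (specifically (i) $\Rightarrow$ (ii)) $J(z) = X$ for \emph{every} $z \in X$. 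This also yields the additional assertion announced in the introduction, that $T$ is hypercyclic. The genuinely substantive input is entirely contained in Lemma 3.2 (the spectral argument splitting into the cases $|\lambda| < 1$, $|\lambda| = 1$, $|\lambda| > 1$, the last using cyclicity); everything in the present theorem is a short deduction from it via the range-density of polynomials in $T$ and the closedness of $J$-sets.
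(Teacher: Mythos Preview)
There is a genuine gap. Your key step is the claim that, because $P(T)$ has dense range and $J(x)$ has non-empty interior, the set $P(T)(J(x))$ (and hence $J(P(T)x)$) is dense in $X$. This inference is not valid: ``dense range'' only says that $P(T)(X)$ is dense, not that $P(T)(U)$ is dense for every non-empty open set $U$. For a concrete obstruction, take $X=\ell^2(\mathbb{N})$ and $P(T)=B$ the (surjective) backward shift; then $B$ has dense range, but the image under $B$ of a small ball is again a small ball, certainly not dense. In the proof of Theorem~3.3 this step worked precisely because the hypothesis there was $J(x)=X$, so that $P(T)(J(x))=P(T)(X)$ is dense by definition of dense range. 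Here you only have $J(x)^o\neq\emptyset$, and that is exactly the gap the theorem is meant to bridge; your argument silently assumes the conclusion.

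This is why the paper does not simply recycle the Theorem~3.3 argument. Instead it adapts the Bourdon--Feldman strategy: one first shows that the complement $X\setminus J(x)^o$ is $T$-invariant (Lemma~4.3), then that polynomials $Q$ with $Q(T)x\notin J(x)$ send $J(x)$ into $X\setminus J(x)^o$ (Lemma~4.4), and finally that no $P(T)x$ can lie on the boundary $\partial(J(x)^o)$ (Lemma~4.5). The conclusion follows because the dense connected set $\{P(T)x\}$ cannot avoid the boundary of a proper set with non-empty interior. The substantive work lies in Lemmas~4.3--4.5, which have no counterpart in your proposal; Lemma~3.2 alone is not enough.
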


In order to prove Theorem 4.1 we follow the steps of the proof of Bourdon-Feldman's theorem. Of course there are some extra technicalities which have to be taken care
since the orbit $Orb(T,x)$ of $x$ under $T$ is replaced by the extended limit set $J(x)$ of $x$.

\begin{lemma}
If for some non-zero polynomial $P$ the operator $P(T)$ has dense range and $x$ is a cyclic vector for $T$ then $P(T)x$ is cyclic for $T$.
\end{lemma}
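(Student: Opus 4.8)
The plan is to show that the linear span of $Orb(T, P(T)x)$ is dense in $X$, using that $x$ is cyclic and that $P(T)$ has dense range. The key observation is that $T$ commutes with $P(T)$, so that for every non-negative integer $n$ we have $T^n\bigl(P(T)x\bigr) = P(T)\bigl(T^n x\bigr)$. Consequently the orbit of $P(T)x$ under $T$ is the image under $P(T)$ of the orbit of $x$ under $T$, and hence
\[
\mathrm{span}\,Orb\bigl(T, P(T)x\bigr) \;=\; \mathrm{span}\,\bigl\{P(T)T^n x : n \ge 0\bigr\} \;=\; P(T)\bigl(\mathrm{span}\,Orb(T,x)\bigr),
\]
where the last equality uses linearity of $P(T)$.

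From here the argument is a short topological one. Since $x$ is cyclic, $\mathrm{span}\,Orb(T,x)$ is dense in $X$; since $P(T)$ is a continuous operator, it maps dense sets to sets whose closure contains $P(T)(X)$, i.e. $\overline{P(T)\bigl(\mathrm{span}\,Orb(T,x)\bigr)} \supseteq \overline{P(T)(X)}$. But $P(T)$ has dense range by hypothesis, so $\overline{P(T)(X)} = X$. Combining, $\overline{\mathrm{span}\,Orb\bigl(T,P(T)x\bigr)} = X$, which says precisely that $P(T)x$ is a cyclic vector for $T$.

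I expect no serious obstacle here; the only point requiring a little care is the interchange ``closure of image contains image of closure,'' which holds because $P(T)$ is continuous (so $P(T)(\overline{D}) \subseteq \overline{P(T)(D)}$ for any set $D$), together with the fact that a dense $D$ gives $P(T)(X) = P(T)(\overline{D}) \subseteq \overline{P(T)(D)}$, and then taking closures once more. Everything else is the commutation identity $T^n P(T) = P(T) T^n$ and linearity, both routine.
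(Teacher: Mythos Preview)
Your proof is correct and follows essentially the same approach as the paper: both use the commutation $Q(T)P(T)=P(T)Q(T)$, the cyclicity of $x$, continuity of $P(T)$, and the dense-range hypothesis. The paper simply phrases it pointwise---given $P(T)y$, approximate $y$ by $Q_n(T)x$ and conclude $Q_n(T)(P(T)x)\to P(T)y$---whereas you package the same computation set-theoretically via $\mathrm{span}\,Orb(T,P(T)x)=P(T)(\mathrm{span}\,Orb(T,x))$ and the inclusion $P(T)(\overline{D})\subset\overline{P(T)(D)}$.
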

\begin{proof}
Take $P(T)y$ for some $y\in X$. Since $x$ is cyclic there is a sequence of polynomials $\{ Q_{n}\} $ such that
$Q_{n}(T)x\rightarrow y$. Therefore, $Q_{n}(T)(P(T)x)\rightarrow P(T)y$.
\end{proof}

\begin{lemma}
Assume that $x$ is a cyclic vector for $T$ and $J(x)$ has non-empty interior. Then the set $X\setminus J(x)^{o}$ is $T$-invariant.
\end{lemma}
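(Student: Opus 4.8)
My plan is to reformulate the assertion as the inclusion $T^{-1}(J(x)^{o})\subseteq J(x)^{o}$. Since $T$ is continuous, $T^{-1}(J(x)^{o})$ is open, and since $J(x)$ is closed every open subset of $J(x)$ already lies in $J(x)^{o}$; so it is enough to prove the a priori weaker inclusion $T^{-1}(J(x)^{o})\subseteq J(x)$. Equivalently, I must rule out the existence of a vector $w\in X$ with $w\notin J(x)$ and $Tw\in J(x)^{o}$, and I would do this by contradiction.

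First I would isolate an ``absorption'' property of the interior points of $J(x)$, which replaces the trivial remark (used in the orbit version of Bourdon--Feldman) that the orbit closure contains the forward orbit of each of its interior points: \emph{if $y\in J(x)^{o}$ then $J(y)\subseteq J(x)$}. Indeed, choose an open ball $B$ with $y\in B\subseteq J(x)$; since $J(x)$ is closed and $T$-invariant (Proposition 2.6) we have $\overline{\bigcup_{n\ge 0}T^{n}B}\subseteq J(x)$; and if $v\in J(y)$, say $v=\lim_{j}T^{m_{j}}p_{j}$ with $p_{j}\to y$ and $\{m_{j}\}$ strictly increasing, then $p_{j}\in B$ for all large $j$, whence $v\in\overline{\bigcup_{n\ge 0}T^{n}B}\subseteq J(x)$. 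In particular, for the vector $w$ above, $Tw\in J(x)^{o}$ yields $J(Tw)\subseteq J(x)$.

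Now suppose $w\notin J(x)$ and $Tw\in J(x)^{o}$. Since $J(x)$ is closed we may choose $\delta>0$ with $\overline{B(w,\delta)}\cap J(x)=\emptyset$ and, after shrinking $\delta$, with $T(\overline{B(w,\delta)})\subseteq J(x)^{o}$. As $x$ is cyclic, the set $\{R(T)x:R\ \mbox{a polynomial}\}$ is dense in $X$, so there is a polynomial $R$ with $R(T)x\in B(w,\delta)$. If $R(T)x=0$ we are already done: then $0=R(T)x\in B(w,\delta)$ forces $0\notin J(x)$, while $0=T(R(T)x)\in T(\overline{B(w,\delta)})\subseteq J(x)^{o}$. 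Hence $R(T)x\ne 0$, so $R\ne 0$; by Lemma 3.2 the operator $R(T)$ has dense range, and by Lemma 4.2 the vector $R(T)x$ is cyclic for $T$. Replacing $w$ by $R(T)x$, we may assume from the start that $w$ is a cyclic vector of the form $R(T)x$, still satisfying $w\notin J(x)$ and $Tw=T R(T)x\in J(x)^{o}$.

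The contradiction is then extracted by adapting the corresponding argument of Bourdon--Feldman, now with the absorption property at hand: the decisive step is to show that such a cyclic vector $w$ actually lies in $J(Tw)$. Granting this, $w\in J(Tw)\subseteq J(x)$ (the last inclusion by the absorption property, since $Tw\in J(x)^{o}$), contradicting $w\notin J(x)$. To establish $w\in J(Tw)$ one works inside the neighbourhood $B(Tw,\varepsilon)\subseteq J(x)^{o}$ of $Tw$, uses the cyclicity of $w$ together with the dense range of every operator $P(T)$ (Lemma 3.2) and Lemma 4.2, and keeps careful track of the strictly increasing sequences of exponents that occur. The main obstacle --- exactly as in Bourdon--Feldman --- is that $T$ need not be injective, so that one cannot simply ``pull an orbit back one step''; this is precisely why $w$ was first made into a polynomial image $R(T)x$ and why the dense-range conclusions of Lemma 3.2 are invoked. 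Everything preceding this last step is routine bookkeeping.
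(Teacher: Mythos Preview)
Your reformulation, the absorption property $J(y)\subseteq J(x)$ for $y\in J(x)^{o}$, and the reduction to a cyclic vector of the form $w=R(T)x$ with $w\notin J(x)$ and $Tw\in J(x)^{o}$ are all correct and match the paper's opening moves. The gap is in what you call the ``decisive step'': you assert that $w\in J(Tw)$, but you give no argument beyond a vague reference to Bourdon--Feldman, cyclicity of $w$, and dense range. In fact your own absorption property shows that this target is no easier than the contradiction itself: any sequence $v_{n}\to Tw$ eventually lies in $J(x)^{o}\subseteq J(x)$, and since $J(x)$ is closed and $T$-invariant every limit of $T^{m_{n}}v_{n}$ lands in $J(x)$; thus $w\in J(Tw)$ is \emph{equivalent} to $w\in J(x)$ under your standing hypotheses. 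So you have not reduced the problem, and the tools you list (cyclicity of $w$, dense range) do not by themselves produce sequences $v_{n}\to Tw$ with $T^{m_{n}}v_{n}\to w$.

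The paper obtains the contradiction $R(T)x\in J(x)$ directly, by a specific commutativity trick that your sketch does not capture. From $TR(T)x\in J(x)$ one picks $x_{n}\to x$ with $T^{k_{n}}x_{n}\to TR(T)x$; applying an arbitrary polynomial $Q(T)$ to both sequences yields $Q(T)x_{n}\to Q(T)x$ and $T^{k_{n}}Q(T)x_{n}\to R(T)\,TQ(T)x$, hence
\[
R(T)\,TQ(T)x\in J(Q(T)x)\subseteq J(TQ(T)x)\qquad\text{for every polynomial }Q.
\]
Now one uses cyclicity of $Tx$ (not of $w$): by Lemmata~3.2 and~4.2 there are polynomials $Q_{n}$ with $TQ_{n}(T)x\to x$, whence $R(T)\,TQ_{n}(T)x\to R(T)x$; feeding the displayed memberships into Lemma~2.5 gives $R(T)x\in J(x)$, the desired contradiction. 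The missing idea in your proposal is precisely this: exploit that $TR(T)x\in J(x)$ (not merely $J(x)^{o}$) to get sequences \emph{based at $x$}, push them through $Q(T)$, and then let $TQ_{n}(T)x$ approximate $x$ rather than $Tw$.
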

\begin{proof}
We argue by contradiction. Let $y\in X\setminus J(x)^{o}$ be such that $Ty\in J(x)^{o}$. By the continuity of $T$ we may assume that $ y\notin J(x)$. Moreover, since
$x$ is cyclic we may find a non-zero polynomial $P(T)$ such that $ P(T)x\in X\setminus J(x)^{o}$ and $ TP(T)x\in J(x)^{o}$. Hence, there exist a sequence $\{ x_{n}\}
\subset X$ and a strictly increasing sequence of positive integers $\{ k_{n}\}$ such that $x_{n} \rightarrow x$ and $T^{k_{n} }x_{n} \rightarrow TP(T)x$. Taking any
polynomial $Q$ we get $Q(T)x_{n} \rightarrow Q(T)x $ and $T^{k_{n}}Q(T)x_{n}= Q(T)(T^{k_{n}}x_{n})\rightarrow Q(T)TP(T)x $. So it follows that $P(T)TQ(T)x\in
J(Q(T)x)$ for every polynomial $Q$. But $J(Q(T)x)\subset J(TQ(T)x)$, hence we get $P(T)TQ(T)x\in J(TQ(T)x)$ for every polynomial $Q$. By Lemmata 3.2 and 4.2, $Tx$ is
a cyclic vector for $T$, hence there exists a sequence of the form $\{ Q_{n}(T)x\}$, for some non-zero polynomials $Q_{n}$, such that $TQ_{n}(T)x\rightarrow x$.
Therefore it follows that $ P(T)TQ_{n}(T)x\rightarrow P(T)x$. Observe that $P(T)TQ_{n}(T)x\in J(TQ_{n}(T)x)$ and using Lemma 2.5 it follows that $P(T)x\in J(x)$ which
is a contradiction.
\end{proof}

\begin{lemma}
Assume that $x$ is a cyclic vector for $T$ and $J(x)$ has non-empty interior. Suppose that $Q(T)x\in X\setminus J(x)$ for some non-zero polynomial $Q$. Then
$Q(T)(J(x))\subset X\setminus J(x)^{o}$.
\end{lemma}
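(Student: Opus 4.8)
The plan is to deduce the statement from two ingredients: the elementary inclusion $Q(T)\big(J(x)\big)\subseteq J\big(Q(T)x\big)$, and the disjointness $J\big(Q(T)x\big)\cap J(x)^{o}=\emptyset$. Granting both, we immediately get $Q(T)(J(x))\subseteq J(Q(T)x)\subseteq X\setminus J(x)^{o}$, which is the claim. The inclusion is exactly the observation already used in the proof of Theorem 3.3: if $y\in J(x)$, choose $x_{n}\to x$ and a strictly increasing sequence $\{k_{n}\}$ with $T^{k_{n}}x_{n}\to y$; then $Q(T)x_{n}\to Q(T)x$ and $T^{k_{n}}\big(Q(T)x_{n}\big)=Q(T)\big(T^{k_{n}}x_{n}\big)\to Q(T)y$, so $Q(T)y\in J(Q(T)x)$. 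Thus all the content is in the disjointness, which I would prove by contradiction.

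So suppose $w\in J\big(Q(T)x\big)\cap J(x)^{o}$. Since $J(x)^{o}$ is open, fix $\varepsilon>0$ with the open ball $B(w,\varepsilon)$ contained in $J(x)^{o}$. The goal is to show that every neighbourhood of $Q(T)x$ meets $J(x)^{o}$; this forces $Q(T)x\in\overline{J(x)^{o}}\subseteq\overline{J(x)}=J(x)$ — using that $J(x)$ is closed, Proposition 2.6 — contradicting the hypothesis $Q(T)x\in X\setminus J(x)$. To manufacture such points, fix an arbitrary neighbourhood $U$ of $Q(T)x$. By the neighbourhood description of the prolongational limit set (Remark 2.3), applied to the neighbourhoods $U$ of $Q(T)x$ and $B(w,\varepsilon)$ of $w$, there are a positive integer $n$ and a point $u\in U$ with $T^{n}u\in B(w,\varepsilon)\subseteq J(x)^{o}$.

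The key step is then to pull $u$ itself back into $J(x)^{o}$. By Lemma 4.3 the set $X\setminus J(x)^{o}$ is $T$-invariant; reading this contrapositively, $Tz\in J(x)^{o}$ implies $z\in J(x)^{o}$, i.e. $T^{-1}\big(J(x)^{o}\big)\subseteq J(x)^{o}$. Applying this $n$ times to $T^{n}u\in J(x)^{o}$ gives successively $T^{n-1}u\in J(x)^{o},\dots,Tu\in J(x)^{o},u\in J(x)^{o}$. Since $U$ was an arbitrary neighbourhood of $Q(T)x$, we conclude $Q(T)x\in\overline{J(x)^{o}}$, which completes the contradiction and hence the proof. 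The only step that requires any care is this contrapositive/iterated use of Lemma 4.3; it is also worth noting that, unlike the analogous lemma of Bourdon and Feldman, this argument does not need $Q(T)x$ to be cyclic — the hypotheses ``$x$ cyclic'' and ``$J(x)^{o}\neq\emptyset$'' are used only through Lemma 4.3 (and to guarantee there is a point of $J(x)^{o}$ available to pull back to).
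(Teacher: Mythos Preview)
Your argument is correct. Both your proof and the paper's hinge on Lemma~4.3, but the organization differs. The paper argues directly: given $y\in J(x)$, pick $x_{n}\to x$ with $T^{k_{n}}x_{n}\to y$; since $Q(T)x\in X\setminus J(x)$ and $X\setminus J(x)$ is open, eventually $Q(T)x_{n}\in X\setminus J(x)\subset X\setminus J(x)^{o}$; then the \emph{forward} $T$-invariance of $X\setminus J(x)^{o}$ from Lemma~4.3 gives $T^{k_{n}}Q(T)x_{n}=Q(T)T^{k_{n}}x_{n}\in X\setminus J(x)^{o}$, and passing to the limit in the closed set $X\setminus J(x)^{o}$ yields $Q(T)y\in X\setminus J(x)^{o}$. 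You instead factor through the inclusion $Q(T)(J(x))\subset J(Q(T)x)$ and prove the stronger disjointness $J(Q(T)x)\cap J(x)^{o}=\emptyset$, using Lemma~4.3 in its contrapositive (\emph{backward}) form $T^{-1}(J(x)^{o})\subset J(x)^{o}$. The paper's route is a touch shorter; yours buys the marginally stronger conclusion that the entire extended limit set $J(Q(T)x)$, not just the image $Q(T)(J(x))$, avoids $J(x)^{o}$.
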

\begin{proof}
Let $y\in J(x)$. There exist a sequence $\{ x_{n}\} \subset X$ and a strictly increasing sequence of positive integers $\{ k_{n}\}$ such that $x_{n}\rightarrow x$ and
$T^{k_{n}}x_{n}\rightarrow y$. Since $X\setminus J(x)$ is an open set we may assume that $Q(T)x_{n}\in X\setminus J(x)$ for every $n$ and thus $Q(T)x_{n}\in
X\setminus J(x)^{o}$. By Lemma 4.3 the set $X\setminus J(x)^{o}$ is $T$-invariant, therefore $T^{k_{n}}Q(T)x=Q(T)T^{k_{n} }x_{n}\in X\setminus J(x)^{o}$. Now it is
plain that $Q(T)y\in X\setminus J(x)^{o}$.
\end{proof}

\begin{lemma}
Assume that $x$ is a cyclic vector for $T$, $J(x)$ has non-empty interior and let $P$ be any non zero polynomial. Then $P(T)x\notin\partial(J(x)^{o})$.
\end{lemma}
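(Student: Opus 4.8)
The plan is to follow the scheme of the Bourdon--Feldman argument, with $J(x)$ in the role of $\overline{Orb(T,x)}$ and Lemmas 4.3--4.4 playing the part of their invariance lemmas.

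I would first reduce to the essential case by point-set topology. Since $J(x)$ is closed (Proposition 2.6) and $J(x)^o\subseteq J(x)$, we get $\overline{J(x)^o}\subseteq J(x)$, hence $\partial(J(x)^o)\subseteq J(x)$; and since $X\setminus J(x)^o$ is closed with $\overline{X\setminus J(x)}=X\setminus J(x)^o$, we also get $\partial(J(x)^o)\subseteq\overline{X\setminus J(x)}$. So if $P(T)x\in\partial(J(x)^o)$ we simultaneously have $P(T)x\in J(x)$, $P(T)x\notin J(x)^o$, $P(T)x\in\overline{J(x)^o}$ and $P(T)x\in\overline{X\setminus J(x)}$; if $P(T)x\notin J(x)$ or $P(T)x\in J(x)^o$ there is nothing to prove, so it remains to contradict the coexistence of these four facts (we may also assume $J(x)\neq X$, otherwise $\partial(J(x)^o)=\emptyset$).

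Then I would use cyclicity to manufacture approximating sequences. Since $\{R(T)x:R\text{ a polynomial}\}$ is dense and $J(x)^o$, $X\setminus J(x)$ are nonempty and open, pick polynomials $Q_j$ with $Q_j(T)x\in J(x)^o$, $Q_j(T)x\to P(T)x$, and polynomials $R_m$ with $R_m(T)x\notin J(x)$, $R_m(T)x\to P(T)x$. By Lemma 4.4, $R_m(T)(J(x))\subseteq X\setminus J(x)^o$ for every $m$. Since $P(T)x\in J(x)$ and polynomials in $T$ commute, $P(T)R_m(T)x=R_m(T)(P(T)x)\in R_m(T)(J(x))\subseteq X\setminus J(x)^o$; letting $m\to\infty$ and using continuity of $P(T)$ and closedness of $X\setminus J(x)^o$ gives $P(T)^2x\notin J(x)^o$, and feeding the interior points $Q_j(T)x$ through the $R_m(T)$ and invoking Lemma 2.5 produces, in the same way, $P(T)^2x$ (and, iterating, the iterates $P(T)^kx$) as limits of points of $X\setminus J(x)^o$. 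To these I would add two further ingredients: that $J(v)\subseteq X\setminus J(x)^o$ whenever $v\notin J(x)$ (immediate from Lemma 4.3, since a defining sequence for $J(v)$ eventually lies in $X\setminus J(x)^o$, which is $T$-invariant and closed), and that $P(T)v\in J(v)$ for every $v\in X$ (from $P(T)x\in J(x)$, by the commuting/approximation argument of Lemma 4.3 together with Lemma 2.5). The aim is to combine all of this to force $P(T)x$ into $X\setminus\overline{J(x)^o}$, contradicting $P(T)x\in\overline{J(x)^o}$.

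The hard part will be closing this last step. Since $P(T)x$ lies only on the \emph{boundary} of $J(x)^o$, that interior may taper arbitrarily thin as it approaches $P(T)x$, so one cannot transport a fixed ball sitting inside $J(x)^o$ to a ball around $P(T)x$; and the operators $R_m(T)$ are controlled only at the single vector $x$, not in operator norm, so the limits along $m$ are delicate. This is exactly the ``extra technicality'' flagged by the authors when $Orb(T,x)$ is replaced by $J(x)$, and I expect it must be handled by a Bourdon--Feldman-type diagonal construction: choosing $Q_j$, $R_m$ and the iteration depths jointly along sufficiently sparse subsequences so that the resulting limits are pinned inside $J(x)^o$ while remaining in the closed set $X\setminus J(x)^o$.
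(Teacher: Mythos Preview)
Your setup is good, and many of the partial facts you assemble are exactly the right ones: the boundary inclusions, the approximating sequences $R_m(T)x\to P(T)x$ from outside $J(x)$, and the use of Lemma~4.4 to push $J(x)$ into $X\setminus J(x)^o$ via the $R_m(T)$. But the proof is genuinely incomplete, and the gap is not a matter of ``extra technicality'' to be fixed by a diagonal argument: you are aiming at the wrong contradiction.

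You try to force $P(T)x$ into $X\setminus\overline{J(x)^o}$. This is not what closes the argument in the paper, and it is unclear how your iterated facts about $P(T)^kx$ would ever yield an \emph{open} neighbourhood of $P(T)x$ disjoint from $J(x)^o$. The missing idea is a density argument using Lemma~3.2. First, do not restrict your interior points $Q_j(T)x$ to converge to $P(T)x$: for \emph{any} polynomial $S$ with $S(T)x\in J(x)^o$, your very computation gives $R_m(T)S(T)x=S(T)R_m(T)x\to S(T)P(T)x=P(T)S(T)x$, and since each $R_m(T)S(T)x\in X\setminus J(x)^o$ by Lemma~4.4, one obtains $P(T)S(T)x\in X\setminus J(x)^o$; by continuity this yields $P(T)(J(x)^o)\subset X\setminus J(x)^o$. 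Second, since $P(T)x\in J(x)$, Lemma~4.4 applied to each $R_m$ also gives $R_m(T)P(T)x\in X\setminus J(x)^o$ for all $m$, hence $P(T)(\{R(T)x:R(T)x\notin J(x)\})\subset X\setminus J(x)^o$ as well. Now set $D=J(x)^o\cup\{R(T)x:R(T)x\notin J(x)\}$; this set is dense in $X$, and you have just shown $P(T)D\subset X\setminus J(x)^o$. Here is the step you are missing: by Lemma~3.2, $P(T)$ has dense range, so $P(T)D$ is dense in $X$; but then $X\setminus J(x)^o$ is dense, contradicting $J(x)^o\neq\emptyset$. No diagonal construction is needed.
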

\begin{proof}
In view of Lemma 4.4 let us define the set
\begin{equation*}
{\mathcal A}=\{ Q:\, Q\, \mbox{is a polynomial and}\, Q(T)x\in X\setminus J(x)\}.
\end{equation*}
Note that the set $\{ Qx:\, Q\in {\mathcal A}\}$ is dense in  $X\setminus J(x)^{o}$. We argue by contradiction. Suppose there exists a non-zero polynomial $P$ so that
$P(T)x\in\partial(J(x)^{o})$. The inclusion $\partial(J(x)^{o})\subset\partial J(x)$ gives that $P(T)x\in\partial(X\setminus J(x))$. We will prove that
$P(T)(J(x)^{o})\subset X\setminus J(x)^{o}$. Since $x$ is a cyclic vector and $J(x)^{o}$ is open, it is enough to show that: if $S(T)x\in J(x)^{o}$ for some non-zero
polynomial $S$ then $P(T)S(T)x\in X\setminus J(x)^{o}$. We have $P(T)x\in\partial (X\setminus J(x))$. Therefore there exists a sequence $\{Q_{n}(T)x\}$ such that
$Q_{n}\in {\mathcal A}$ and $Q_{n}(T)x\rightarrow P(T)x$. Hence Lemma 4.4 yields that $Q_{n}(T)S(T)x\in X\setminus J(x)^{o}$. So, we get $ Q_{n}(T)S(T)x\rightarrow
P(T)S(T)x$ and $P(T)S(T)x\in X\setminus J(x)^{o}$. Consider the set $D:=J(x)^{o}\bigcup\{ Q(T)x\, :\, Q\in {\mathcal A}\}$ which is dense in $X$. By Lemma 3.2,
$P(T)D$ is dense in $X$. Since $P(T)x\in J(x)$, Lemma 4.4 implies that $Q(T)P(T)x\in X\setminus J(x)^o$ for every  $Q\in {\mathcal A}$. Hence
\begin{equation*}
P(T)D=P(T)(J(x)^{o})\bigcup\{ P(T)Q(T)x\, :\, Q\in {\mathcal A}\}\subset X\setminus J(x)^{o},
\end{equation*}
which is a contradiction.
\end{proof}

\noindent\textbf{Proof of Theorem 4.1}  The set $\{ P(T)x:\, P\,\mbox{is a non-zero polynomial} \}$ is dense and connected. Assume that $J(x)\neq X$. So we can find a
non-zero polynomial $P$ such that $P(T)x$ $\in$ $\partial(J(x)^{o})$. This contradicts Lemma 4.5. \qed

\begin{corollary}
Let $T:X\rightarrow X$ be an operator. If there exists a cyclic vector $x\in X$ for $T$ such that $J(x)$ has non-empty interior then $T$ is hypercyclic.
\end{corollary}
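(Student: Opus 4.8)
The plan is to obtain this as an immediate consequence of Theorem 4.1 together with Theorem 3.3. First I would invoke Theorem 4.1: since $x$ is cyclic for $T$ and $J(x)^o\neq\emptyset$, that theorem gives at once $J(x)=X$. So the hypothesis ``$J(x)$ has non-empty interior'' has already been upgraded to ``$J(x)=X$'' by the Bourdon--Feldman-type argument of the previous lemmas.

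Next I would note that the assumptions already force $X$ to be separable, so that Theorem 3.3 is applicable. Indeed, since $x$ is cyclic, the linear span of $Orb(T,x)=\{T^nx:n\geq 0\}$ is dense in $X$; the finite linear combinations of the vectors $T^nx$ with coefficients in $\mathbb{Q}+i\mathbb{Q}$ (resp.\ $\mathbb{Q}$ in the real case) form a countable dense subset of that span, hence of $X$. Thus $X$ is separable.

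Finally, with $X$ separable, $x$ cyclic for $T$, and $J(x)=X$, Theorem 3.3 applies directly and yields that $T$ is hypercyclic, which is exactly the assertion of the corollary.

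I do not expect any real obstacle here: all the substantive work has been done in Theorem 4.1 (the extension of Bourdon--Feldman) and in Lemma 3.2 / Theorem 3.3 (the argument that cyclicity plus $J(x)=X$ forces hypercyclicity via $\sigma_p(T^*)=\emptyset$ and the density of $\{y:J(y)=X\}$). The only point worth a sentence is the separability reduction above, since separability is not part of the stated hypotheses but is automatic from the existence of a cyclic vector.
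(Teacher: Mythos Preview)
Your proposal is correct and follows exactly the paper's approach: the paper's proof of this corollary is the single line ``The proof follows by combining Theorems 3.3 and 4.1.'' Your added remark that the existence of a cyclic vector forces $X$ to be separable (so that Theorem 3.3 indeed applies) is a useful clarification that the paper leaves implicit.
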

\begin{proof}
The proof follows by combining Theorems 3.3 and 4.1.
\end{proof}

\begin{corollary}[Bourdon-Feldman's theorem]
Let $T:X\rightarrow X$ be an operator. If for some vector $x\in X$ the orbit $Orb(T,x)$ is somewhere dense then it is everywhere dense.
\end{corollary}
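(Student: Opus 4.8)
The plan is to deduce the statement from Corollary 4.7 by means of a short Baire–category argument (the case $X=\{0\}$ being trivial, assume $X\ne\{0\}$). Set $G:=\overline{Orb(T,x)}$ and suppose $G$ has non-empty interior, $W:=G^{o}\ne\emptyset$. First I would observe that $x$ is cyclic for $T$: the closed linear span of $Orb(T,x)$ is a closed subspace of $X$ containing the open set $W$, and a closed subspace with non-empty interior must equal $X$.

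Next I would show that $L(x)$, and hence $J(x)$, has non-empty interior. The elementary point is the identity $G=Orb(T,x)\cup L(x)$: a point of $\overline{Orb(T,x)}$ which is not some iterate $T^{k}x$ is the limit of a sequence of iterates taking infinitely many values, and passing to a subsequence with strictly increasing exponents exhibits it as a member of $L(x)$; the reverse inclusion is immediate. Now $W$ is a non-empty open subset of a Banach space, hence a Baire space, and $W=(W\cap Orb(T,x))\cup(W\cap L(x))$. Since $Orb(T,x)$ is countable (and singletons are nowhere dense in $X$), the first set is meager in $W$, so $W\cap L(x)$ is non-meager in $W$; being closed in $W$ — recall $L(x)$ is closed by Proposition 2.6 — it has non-empty interior. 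Therefore $L(x)^{o}\ne\emptyset$, and so $J(x)^{o}\supseteq L(x)^{o}\ne\emptyset$.

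At this stage Corollary 4.7 applies to the cyclic vector $x$ with $J(x)^{o}\ne\emptyset$: the operator $T$ is hypercyclic, hence the set of hypercyclic vectors is dense in $X$. Since $L(x)$ contains a non-empty open ball, I can choose a hypercyclic vector $w\in L(x)$. Then $w\in L(x)\subseteq\overline{Orb(T,x)}$, and $\overline{Orb(T,x)}$ is closed and $T$-invariant, so it contains $\overline{Orb(T,w)}=X$. Thus $Orb(T,x)$ is everywhere dense.

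I expect the only real (and still modest) obstacle to be the Baire–category step: one must verify carefully both that the decomposition $G=Orb(T,x)\cup L(x)$ holds and that $W\cap L(x)$ is genuinely closed in $W$, so that ``non-meager'' can be upgraded to ``has non-empty interior''. Everything afterwards is a straightforward assembly of results already proved in the paper.
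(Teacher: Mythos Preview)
Your argument is correct, with one labeling slip: what you invoke as ``Corollary 4.7'' is the very statement you are proving; you mean Corollary~4.6 (cyclic vector with $J(x)^{o}\ne\emptyset$ $\Rightarrow$ $T$ hypercyclic). Once that is fixed, everything goes through.

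The route differs from the paper's chiefly in the endgame. Both arguments begin by noting that $x$ is cyclic and that $L(x)^{o}\ne\emptyset$; your Baire/decomposition step $\overline{Orb(T,x)}=Orb(T,x)\cup L(x)$ actually supplies the justification the paper merely asserts. From there the paper applies Theorem~4.1 to get $J(x)=X$, picks $l$ with $T^{l}x\in\overline{Orb(T,x)}^{o}$, and shows directly that $J(T^{l}x)\subset\overline{Orb(T,x)}$ using $T$-invariance of the closed orbit closure; no appeal to the density of hypercyclic vectors is needed. You instead go through Corollary~4.6 (hence implicitly Theorem~3.3) to obtain a dense set of hypercyclic vectors and then pick one inside $L(x)^{o}$. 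Your version is perhaps more conceptual and reuses more of the paper's machinery; the paper's version is slightly more self-contained, avoiding the extra passage through hypercyclicity to conclude density of the orbit. Either way the key mechanism---$\overline{Orb(T,x)}$ is closed, $T$-invariant, and has non-empty interior---is the same.
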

\begin{proof}
It is easy to see that $x$ is a cyclic vector for $T$. Since $Orb(T,x)$ is somewhere dense, it follows that $L(x)^o\neq \emptyset$. Note that $L(x)\subset J(x)$.
Hence Theorem 4.1 implies that $J(x)=X$. The set $\overline{Orb(T,x)}$ has non-empty interior so we can find a positive integer $l$ such that $T^{l}x\in
\overline{Orb(T,x)}^{o}$. Since $J(x)=X$ and $J(x)\subset J(T^{l}x)$ we arrive at $J(T^{l}x)=X$. So it is enough to prove that $\overline{Orb(T,x)}=J(T^{l}x)$. Let
$y\in J(T^{l}x)$. There exist a sequence $\{ x_{n}\} \subset X$ and a strictly increasing sequence of positive integers $\{ k_{n}\}$ such that $x_{n} \rightarrow
T^{l}x$ and $T^{k_{n} }x_{n} \rightarrow y$. Observing that $T^{l}x\in \overline{Orb(T,x)}^{o}$, without loss of generality we may assume that $x_{n}\in
\overline{Orb(T,x)}^{o}$ for every $n$. Moreover $\overline{Orb(T,x)}$ is $T$-invariant, hence $T^{k_{n}}x_{n}\in \overline{Orb(T,x)}$ for every $n$. Since
$T^{k_{n}}x_{n}\rightarrow y$ we conclude that $y\in\overline{Orb(T,x)}$.
\end{proof}

\begin{corollary}
Let $T:X\rightarrow X$ be an operator. Suppose there exist a vector $x\in X$ and a polynomial $P$ such that $P(T)x$ is a cyclic vector for $T$. If the set $J(x)$ has
non-empty interior then $T$ is hypercyclic.
\end{corollary}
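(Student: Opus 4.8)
The plan is to reduce this corollary to Corollary 4.6, which already asserts that an operator admitting a cyclic vector $z$ with $J(z)^{o}\neq\emptyset$ is hypercyclic. Here we are given cyclicity of $P(T)x$ rather than of $x$, while the interior hypothesis is placed on $J(x)$; so the only thing that needs to be checked is that $x$ itself is a cyclic vector for $T$.

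First I would observe that cyclicity of $P(T)x$ forces cyclicity of $x$. (Implicitly $P\neq 0$, since $P(T)x=0$ is never cyclic when $X\neq\{0\}$.) Indeed, writing $P(t)=\sum_{j=0}^{d}c_{j}t^{j}$, every element of $Orb(T,P(T)x)$ has the form $T^{n}P(T)x=P(T)T^{n}x=\sum_{j=0}^{d}c_{j}T^{n+j}x$, which lies in the linear span of $Orb(T,x)=\{x,Tx,T^{2}x,\ldots\}$. Hence the linear span of $Orb(T,x)$ contains the linear span of $Orb(T,P(T)x)$, and the latter is dense in $X$ by hypothesis; therefore the linear span of $Orb(T,x)$ is dense, i.e. $x$ is cyclic for $T$. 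Equivalently, the cyclic subspace generated by $P(T)x$ is contained in the one generated by $x$.

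With this in hand, the vector $x$ is cyclic and $J(x)^{o}\neq\emptyset$, so Corollary 4.6 — which combines Theorem 4.1 (if $x$ is cyclic and $J(x)^{o}\neq\emptyset$ then $J(x)=X$) with Theorem 3.3 (if $x$ is cyclic and $J(x)=X$ then $T$ is hypercyclic) — applies directly and yields that $T$ is hypercyclic.

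I do not expect a genuine obstacle here: the entire content is the elementary remark that replacing $x$ by $P(T)x$ can only shrink the cyclic subspace, after which the conclusion is already packaged in Corollary 4.6. One could alternatively try to argue with $J(P(T)x)$ directly, using the inclusion $P(T)(J(x))\subseteq J(P(T)x)$ from the proof of Theorem 3.3 and attempting to show $J(P(T)x)^{o}\neq\emptyset$; but $P(T)$ need not be an open map, so that route is more delicate and I would avoid it.
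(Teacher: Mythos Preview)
Your proof is correct and follows exactly the paper's own argument: the paper states that since $P(T)x$ is cyclic ``it is obvious that $x$ is a cyclic vector for $T$'' and then invokes Corollary 4.6. You have simply supplied the details behind the word ``obvious'', which is fine.
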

\begin{proof}
Since $P(T)x$ is a cyclic vector for $T$ it is obvious that $x$ is a cyclic vector for $T$. Using the hypothesis that the set $J(x)$ has non-empty interior, Corollary
4.6 implies the desired result.
\end{proof}

\begin{remark}
The conclusion of Corollary 4.6 does not hold in general if $x$ is a cyclic vector for $T$ and $J(P(T)x)=X$ for some polynomial $P$. To see that, consider the space
$X=\mathbb{C}  \oplus l^2(\mathbb{N})$ and let $B:l^2(\mathbb{N})\rightarrow l^2(\mathbb{N})$ be the backward shift operator. Define the operator $T=2I\oplus 3B:
X\rightarrow X$, where $I$ denotes the identity operator acting on $\mathbb{C}$. Take any hypercyclic vector $y$ for $3B$ and define $x=1\oplus y$. Then $x$ is cyclic
for $T$ (in fact $x$ is supercyclic for $T$) and obviously $T$ is not hypercyclic. In fact it holds that $J(x)=\emptyset$. Consider the polynomial $P(z)=z-2$. Then
$P(T)x=0\oplus P(3B)y$. Since $y$ is hypercyclic for $3B$, by a classical result due to Bourdon \cite{Bourdon}, the vector $P(3B)x$ is hypercyclic for $3B$ as well.
Then using a similar argument as in Remark 3.6 we conclude that $J(P(T)x)=J(0\oplus P(3B)y)=X$. In particular, the above shows that, \textit{if $T$ is cyclic and
$J(x)=X$ for some vector $x\in X$ then $T$ is not hypercyclic in general}. On the other hand, we have the following.
\end{remark}

\begin{corollary}
Let $T:X\rightarrow X$ be an operator. Suppose $P$ is a non-zero polynomial such that $P(T)$ has dense range. If $x$ is a cyclic vector for $T$, $P(T)x\neq 0$ and
$J(P(T)x)^o \neq \emptyset $ then $T$ is hypercyclic.
\end{corollary}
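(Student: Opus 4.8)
The plan is to obtain this as an immediate consequence of Corollary 4.6, once we know that $P(T)x$ is itself a cyclic vector for $T$. So the first step is to invoke Lemma 4.2: since $P$ is a non-zero polynomial for which $P(T)$ has dense range and $x$ is cyclic for $T$, Lemma 4.2 gives that $P(T)x$ is cyclic for $T$. This is exactly the place where the hypothesis $P(T)x\neq 0$ is needed and should be quoted explicitly — a cyclic vector must be non-zero (otherwise the closed span of its orbit is $\{0\}\neq X$), whereas the rest of the argument behind Lemma 4.2 only produces a dense linear span, via the identity $Q_n(T)\bigl(P(T)x\bigr)=P(T)\bigl(Q_n(T)x\bigr)$ together with the density of the range of $P(T)$.

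Having established that $z:=P(T)x$ is a (genuine, non-zero) cyclic vector for $T$, the second step is simply to apply Corollary 4.6 to $z$: by hypothesis $J(z)^o=J(P(T)x)^o\neq\emptyset$, so Corollary 4.6 yields that $T$ is hypercyclic, which is the desired conclusion. Note that no assumption on $x$ beyond cyclicity is used, and in particular nothing needs to be said about $J(x)$ itself.

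There is essentially no hard part here: the statement just repackages the chain of implications ``$P(T)$ has dense range and $x$ cyclic $\Rightarrow$ $P(T)x$ cyclic'' (Lemma 4.2) followed by ``$T$ admits a cyclic vector whose $J$-set has non-empty interior $\Rightarrow$ $T$ hypercyclic'' (Corollary 4.6, which itself comes from Theorems 3.3 and 4.1). The only subtlety while writing the proof is to keep the non-degeneracy condition $P(T)x\neq 0$ in play when applying Lemma 4.2. If one prefers a self-contained argument, one can bypass Corollary 4.6: Theorem 4.1 applied to the cyclic vector $P(T)x$ gives $J(P(T)x)=X$, and then Theorem 3.3 applied to this same cyclic vector gives that $T$ is hypercyclic.
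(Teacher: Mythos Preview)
Your proof is correct and follows exactly the same route as the paper's own proof: first use Lemma 4.2 to conclude that $P(T)x$ is cyclic for $T$, then apply Corollary 4.6 to this cyclic vector. Your additional remark that the hypothesis $P(T)x\neq 0$ is what guarantees the cyclic vector is genuinely non-zero, and your alternative unpacking via Theorems 4.1 and 3.3, are both accurate elaborations beyond what the paper states explicitly.
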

\begin{proof}
Lemma 4.2 implies that $P(T)x$ is a cyclic vector for $T$. Since $J(P(T)x)^o$ $\neq$ $\emptyset$, Corollary 4.6 implies that $T$ is hypercyclic.
\end{proof}

\section{$J$-class operators}
\begin{definition}
An operator $T:X\rightarrow X $ will be called a \textit{$J$-class operator} provided there exists a non-zero vector $x\in X$ so that the extended limit set of $x$
under $T$ (see Definition 2.2) is the whole space, i.e. $J(x)=X$. In this case $x$ will be called a \textit{$J$-class vector} for $T$.
\end{definition}

The reason we exclude the extended limit set of the zero vector is to avoid certain trivialities, as for example the multiples of the identity operator acting on
finite or infinite dimensional spaces. To explain briefly, for any positive integer $n$ consider the operator $\lambda I: \mathbb{C}^{n}\to \mathbb{C}^{n}$, where
$\lambda $ is a complex number of modulus greater than $1$ and $I$ is the identity operator. It is then easy to check that $J_{\lambda I}(0)=X$ and $J_{\lambda
I}(x)\neq \mathbb{C}^{n}$ for every $x\in \mathbb{C}^{n} \setminus \{ 0\}$. However, the extended limit set of the zero vector plays an important role in checking
whether an operator $T:X\to X$ -acting on a Banach space $X$- supports non-zero vectors $x$ with $J_T(x)=X$, see Proposition 5.9. Let us also point out that from the
examples we presented in section 3, see Remark 3.6, it clearly follows that this new class of operators does not coincide with the class of hypercyclic operators.

Let us turn our attention to non-separable Banach spaces. Obviously a non-separable Banach space cannot support hypercyclic operators. However, it is known that
topologically transitive operators may exist in non-separable Banach spaces, see for instance \cite{BFPW}. On the other hand in \cite{BeKa}, Berm\'{u}dez and Kalton
showed that the non-separable Banach space $l^{\infty}(\mathbb{N})$ of bounded sequences over $\mathbb{C}$ does not support topologically transitive operators. Below
we prove that the Banach space $l^{\infty}(\mathbb{N})$ supports $J$-class operators.

\begin{proposition}
Let $B:l^{\infty}(\mathbb{N})\to l^{\infty}(\mathbb{N})$ be the backward shift where $l^{\infty}(\mathbb{N})$ is the Banach space of bounded sequences over
$\mathbb{C}$, endowed with the usual supremum norm. Then for every $|\lambda |>1$, $\lambda B$ is a $J$-class operator. In fact we have the following complete
characterization of the set of $J$-class vectors. For every $|\lambda |>1$ it holds that
\begin{equation*} \{ x\in l^{\infty}(\mathbb{N}): J_{\lambda
B}(x)=l^{\infty}(\mathbb{N}) \}=c_0(\mathbb{N}),
\end{equation*}
where $c_0(\mathbb{N})=\{ x=(x_n)_{n\in \mathbb{N}}\in l^{\infty}(\mathbb{N}): \lim_{n\to +\infty}x_n= 0 \}$.
\end{proposition}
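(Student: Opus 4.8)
The plan is to prove both inclusions of the asserted equality
$\{x\in l^{\infty}(\mathbb{N}):J_{\lambda B}(x)=l^{\infty}(\mathbb{N})\}=c_0(\mathbb{N})$ directly from the definition of the extended limit set, using the explicit formula $(\lambda B)^{n}x=\lambda^{n}(x_{n+1},x_{n+2},\dots)$ for $x=(x_{j})_{j\in\mathbb{N}}$. The first (and easier) inclusion to handle is $c_0(\mathbb{N})\subseteq\{x:J_{\lambda B}(x)=l^{\infty}(\mathbb{N})\}$; for the reverse inclusion I would actually prove the stronger statement that $J_{\lambda B}(x)=\emptyset$ whenever $x\notin c_0(\mathbb{N})$. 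Once the first inclusion is in hand it also shows that $\lambda B$ is a $J$-class operator, since $c_0(\mathbb{N})$ contains non-zero vectors.

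For the inclusion $c_0(\mathbb{N})\subseteq\{x:J_{\lambda B}(x)=l^{\infty}(\mathbb{N})\}$, fix $x\in c_0(\mathbb{N})$ and an arbitrary $y=(y_{j})_{j\in\mathbb{N}}\in l^{\infty}(\mathbb{N})$; the idea is to produce a sequence $x_{n}\to x$ with $(\lambda B)^{n}x_{n}=y$ for every $n$, which immediately gives $y\in J_{\lambda B}(x)$ with $k_{n}=n$. The vector $x_{n}$ is obtained by gluing: its first $n$ coordinates agree with those of $x$, and its $j$-th coordinate for $j>n$ is set equal to $\lambda^{-n}y_{j-n}$. Then $x_{n}\in l^{\infty}(\mathbb{N})$ because $|\lambda|^{-n}\|y\|_{\infty}\le\|y\|_{\infty}$; a direct computation gives $(\lambda B)^{n}x_{n}=y$; and $\|x_{n}-x\|_{\infty}=\sup_{j>n}|\lambda^{-n}y_{j-n}-x_{j}|\le|\lambda|^{-n}\|y\|_{\infty}+\sup_{j>n}|x_{j}|\to 0$, the first term because $|\lambda|>1$ and the second because $x\in c_0(\mathbb{N})$. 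Since $y$ was arbitrary, $J_{\lambda B}(x)=l^{\infty}(\mathbb{N})$.

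For the reverse inclusion I would argue contrapositively: suppose $x\notin c_0(\mathbb{N})$, so $c:=\limsup_{n}|x_{n}|>0$ and there is a strictly increasing sequence $m_{1}<m_{2}<\cdots$ with $|x_{m_{j}}|>c/2$ for all $j$. Assume for contradiction that some $y\in J_{\lambda B}(x)$, and pick $x_{n}\to x$ together with a strictly increasing $\{k_{n}\}$ such that $(\lambda B)^{k_{n}}x_{n}\to y$, so in particular $M:=\sup_{n}\|(\lambda B)^{k_{n}}x_{n}\|_{\infty}<\infty$. For $n$ large enough that $\|x_{n}-x\|_{\infty}<c/4$, choose $j$ with $m_{j}>k_{n}$; then the $(m_{j}-k_{n})$-th coordinate of $(\lambda B)^{k_{n}}x_{n}$ equals $\lambda^{k_{n}}(x_{n})_{m_{j}}$, whose modulus is at least $|\lambda|^{k_{n}}\bigl(|x_{m_{j}}|-\|x_{n}-x\|_{\infty}\bigr)\ge|\lambda|^{k_{n}}\,c/4$. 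Hence $\|(\lambda B)^{k_{n}}x_{n}\|_{\infty}\ge|\lambda|^{k_{n}}c/4\to\infty$ as $n\to\infty$, contradicting $M<\infty$. Thus $J_{\lambda B}(x)=\emptyset$, and in particular $J_{\lambda B}(x)\neq l^{\infty}(\mathbb{N})$.

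I do not expect a serious obstacle in either direction; the one point that needs a little care is the observation in the last paragraph that the ``large'' coordinates $m_{j}$ of $x$ occur arbitrarily far out, so that for each $n$ one can find an index $m_{j}$ beyond the shift length $k_{n}$ and thereby force $\|(\lambda B)^{k_{n}}x_{n}\|_{\infty}$ to blow up — this is exactly the mechanism by which unboundedness contradicts convergence. It is also worth recording that the same reasoning yields the dichotomy $J_{\lambda B}(x)\in\{\,l^{\infty}(\mathbb{N}),\,\emptyset\,\}$ for every $x$.
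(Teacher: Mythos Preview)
Your proof is correct, and the overall strategy matches the paper's: establish both inclusions via the explicit formula for $(\lambda B)^{n}$. There are, however, two small but genuine differences worth noting.

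For the inclusion $c_0(\mathbb{N})\subseteq\{x:J_{\lambda B}(x)=l^{\infty}(\mathbb{N})\}$, the paper first treats only vectors of finite support (where the gluing construction is slightly simpler), and then invokes the closure property of the set of $J$-class vectors (Lemma~2.5) to pass to all of $c_0(\mathbb{N})$. Your argument handles an arbitrary $x\in c_0(\mathbb{N})$ directly, absorbing the tail $\sup_{j>n}|x_j|$ into the estimate; this is more self-contained and avoids the external lemma.

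For the reverse inclusion, the paper assumes $J_{\lambda B}(x)=l^{\infty}(\mathbb{N})$, applies the definition with the particular target $y=0$, and deduces $x\in c_0(\mathbb{N})$ from the resulting estimates. Your contrapositive argument shows the stronger statement $J_{\lambda B}(x)=\emptyset$ for $x\notin c_0(\mathbb{N})$, yielding the clean dichotomy $J_{\lambda B}(x)\in\{l^{\infty}(\mathbb{N}),\emptyset\}$; the paper records the closely related equivalence ``$J_{\lambda B}(x)=l^{\infty}(\mathbb{N})$ iff $0\in J_{\lambda B}(x)$'' only as a remark after the proof.
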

\begin{proof}
Fix $|\lambda |>1$. Let us first show that if $x$ is a vector in $l^{\infty}(\mathbb{N})$ with finite support then $J_{\lambda B}(x)=l^{\infty}(\mathbb{N})$. For
simplicity let us assume that $x=e_1=(1,0,0,\ldots )$. Take any $y=(y_1,y_2,\ldots )\in l^{\infty}(\mathbb{N})$. Define $x_n=(1,0,\ldots
,0,\frac{y_1}{\lambda^n},\frac{y_2}{\lambda^n},\ldots )$ where $0$'s are taken up to the $n$-th coordinate. Obviously $x_n\in l^{\infty}(\mathbb{N})$ and it is
straightforward to check that $x_n\to e_1$ and   $(\lambda B)^nx_n=y$ for all $n$. Hence, $J_{\lambda B}(e_1)=l^{\infty}(\mathbb{N}) $. Since the closure of the set
consisting of all the vectors with finite support is $c_0(\mathbb{N})$, an application of Lemma 2.5 gives that $c_0(\mathbb{N})$ is contained in $\{ x\in
l^{\infty}(\mathbb{N}): J_{\lambda B}(x)=l^{\infty}(\mathbb{N}) \}$. It remains to show the converse implication. Suppose that $J_{\lambda
B}(x)=l^{\infty}(\mathbb{N})$ for some non-zero vector $x=(x_{1},x_{2},\ldots )\in l^{\infty}(\mathbb{N})$. Then there exist a sequence $ y_n =(y_{n1},y_{n2},\ldots
)$, $n=1,2,\ldots $ in $l^{\infty}(\mathbb{N})$ and a strictly increasing sequence of positive integers $\{ k_n \}$ such that $y_n \to x$ and $(\lambda B)^{k_n
}y_n\to 0$. Consider $\epsilon >0$. There exists a positive integer $n_0$ such that $\| y_n-x\| <\epsilon $ and  $\| (\lambda B)^{k_n}y_n\|=|\lambda
|^{k_n}\sup_{m\geq k_n+1}|y_{nm}|<\epsilon$ for every $n\geq n_0$. Hence for every $m\geq k_{n_0}+1$ and since $|\lambda |>1$ it holds that $|x_m|\leq \| y_{n_0}-x\|
+| y_{n_0m}|<2\epsilon $. The last implies that $x\in c_0(\mathbb{N})$ and this completes the proof.
\end{proof}

\begin{remark}
The previous proof actually yields that for every $|\lambda |>1$, $J_{\lambda B}(x)=l^{\infty}(\mathbb{N})$ if and only if $0\in J_{\lambda B}(x)$.
\end{remark}

Next we show that certain operators, such as positive, compact, hyponormal and operators acting on finite dimensional spaces cannot be $J$-class operators. It is well
known that the above mentioned classes of operators are disjoint from the class of hypercyclic operators, see \cite{Kit}, \cite{Bou}.

\begin{proposition}
(i) Let $X$ be an infinite dimensional separable Banach space and $T:X\rightarrow X$ be an operator. If $T$ is compact then it is not a $J$-class operator.

(ii) Let $H$ be an infinite dimensional separable Hilbert space and $T:H\rightarrow H$  be an operator. If $T$ is positive or hyponormal then it is not a $J$-class
operator.
\end{proposition}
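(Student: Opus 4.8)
Throughout I would argue by contradiction, assuming $T$ is a $J$-class operator, say $J(y_0)=X$ (resp. $=H$) for some $y_0\neq 0$; then $J(y_0)$ has non-empty interior, so Corollary 3.4 applies and $T-\lambda I$ has dense range, equivalently $\bar\lambda\notin\sigma_p(T^{*})$, for every $\lambda$ with $|\lambda|\le 1$. For (i): since $X$ is infinite dimensional $0\in\sigma(T)$, and by the Riesz--Schauder theory every nonzero $\lambda\in\sigma(T)$ is an eigenvalue of $T$ of finite algebraic multiplicity with $\bar\lambda\in\sigma_p(T^{*})$; hence by Corollary 3.4 every such $\lambda$ has $|\lambda|>1$, so $\sigma(T)\cap\overline{\mathbb D}=\{0\}$ and there are only finitely many $\lambda_1,\dots,\lambda_m\in\sigma(T)$, all of modulus $>1$. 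I would then use the Riesz functional calculus to write $X=X_0\oplus X_1$ as a topological direct sum of $T$-invariant closed subspaces, where $X_1$ is the (finite dimensional) sum of the spectral subspaces of $\lambda_1,\dots,\lambda_m$, $X_0$ is infinite dimensional, and $\sigma(T|_{X_0})=\{0\}$, so that $T|_{X_0}$ is quasinilpotent and $\|(T|_{X_0})^{n}\|\to 0$. If $y=(c,d)\in J_T(y_0)$ and $y_0=(a,b)$ in $X_0\oplus X_1$, pick $z_n=(a_n,b_n)\to(a,b)$ and $k_n\to\infty$ with $T^{k_n}z_n\to(c,d)$; then $(T|_{X_0})^{k_n}a_n\to c$, and since $\|a_n\|$ is bounded while $\|(T|_{X_0})^{k_n}\|\to 0$ we get $c=0$. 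Thus $J_T(y_0)\subseteq\{0\}\oplus X_1$, a proper closed subspace, contradicting $J_T(y_0)=X$.

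For the positive case of (ii) there is a short argument: since $T=T^{*}\ge 0$, one has $\langle T^{n}v,v\rangle=\|(T^{1/2})^{n}v\|^{2}\ge 0$ for every $v\in H$ and every $n$. As $-y_0\in H=J(y_0)$, choose $z_n\to y_0$ and $k_n\to\infty$ with $T^{k_n}z_n\to -y_0$; letting $n\to\infty$ in $\langle T^{k_n}z_n,z_n\rangle\ge 0$ and using continuity of the inner product gives $\langle -y_0,y_0\rangle=-\|y_0\|^{2}\ge 0$, so $y_0=0$, a contradiction.

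For the hyponormal case of (ii): if $\|T\|\le 1$ then $T$ is power bounded, so by Proposition 2.10 $J(y_0)=L(y_0)\subseteq\{v\in H:\|v\|\le\|y_0\|\}\neq H$; so assume $\|T\|>1$. If $\sigma(T)\cap\overline{\mathbb D}=\emptyset$ then $T$ is invertible, and $T^{-1}$ is again hyponormal (inverting $T^{*}T\ge TT^{*}$) hence normaloid, so $\|T^{-1}\|=1/\operatorname{dist}(0,\sigma(T))<1$; then $\|T^{n}x\|\to\infty$ for every $x\neq 0$, which forces $J(y_0)=\emptyset$, a contradiction. Otherwise fix $\mu\in\sigma(T)$ with $|\mu|\le 1$. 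If $T-\mu I$ does not have dense range, then $\bar\mu\in\sigma_p(T^{*})$, contradicting Corollary 3.4. If $T-\mu I$ has dense range, then being hyponormal it is injective, so $\mu$ lies in the approximate point spectrum of $T$; choosing unit vectors $v_j$ with $(T-\mu I)v_j\to 0$, hyponormality also gives $(T^{*}-\bar\mu I)v_j\to 0$, whence $\|T^{n}v_j\|\to|\mu|^{n}\le 1$ as $j\to\infty$ for each fixed $n$. Closing this last case is the step I expect to be the main obstacle: one must rule out $J(y_0)=H$ in the presence of such ``non-expanding'' approximate eigendirections of modulus $\le 1$, and unlike for a hypercyclic operator the $J$-set permits approximations that $T$ can amplify. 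Here I would exploit the finer orbit structure of hyponormal operators from \cite{Bou} --- for every $x$, $\|T^{n}x\|$ is either non-increasing or tends to $\infty$ --- together with the observation that $0\in J(y_0)$ and the inequality $\|Tv\|^{2}\le\|T^{2}v\|\,\|v\|$ (valid for hyponormal $T$) force $\|T^{m}y_0\|$ to be non-increasing, and adapt Bourdon's argument that hyponormal operators are not hypercyclic to this localized setting.
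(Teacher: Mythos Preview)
Your arguments for (i) and for the positive case of (ii) are correct and follow a more self-contained route than the paper. For (i) the paper simply observes that a $J$-class vector gives a bounded set $C$ with $Orb(T,C)$ dense and then invokes Feldman's Proposition~4.4 in \cite{Fel1}, which says that in this situation no component of $\sigma(T)$ lies in the open unit disk---impossible for compact $T$ since $\{0\}$ is always a component. Your approach via Corollary~3.4 and Riesz--Schauder theory avoids that external citation and actually proves more, pinning down $J(y_0)\subseteq\{0\}\oplus X_1$. For the positive case the paper leaves the proof as an exercise; your inner-product trick using $-y_0\in J(y_0)$ is a clean two-line solution.

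The hyponormal case of (ii), however, is genuinely incomplete, as you yourself flag. The paper's argument is short: since $y_0\neq 0$, one can take for $C$ a small ball about $y_0$ that is bounded and bounded away from zero, and $J(y_0)=H$ makes $Orb(T,C)$ dense; this contradicts Feldman's Theorem~5.10 in \cite{Fel1}, which rules out precisely such a set for hyponormal operators. Your case split does not close the remaining case. The approximate-eigenvector step produces unit vectors $v_j$ unrelated to $y_0$ and gives no control on orbits of points near $y_0$. The final assertion---that $0\in J(y_0)$ together with $\|Tv\|^{2}\le\|T^{2}v\|\,\|v\|$ forces $\|T^{m}y_0\|$ to be non-increasing---is not justified: log-convexity of $n\mapsto\|T^{n}y_0\|$ only makes the ratios $\|T^{n+1}y_0\|/\|T^{n}y_0\|$ non-decreasing, it does not by itself force the sequence to decrease, and in any event controlling the single orbit of $y_0$ is not enough for a statement about $J(y_0)$. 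If you wish to avoid citing Feldman, the natural path is indeed the orbit regularity from \cite{Bou}: log-convexity shows that for each $x$ in a ball $C$ around $y_0$ the norms $\|T^{n}x\|$ either tend to $\infty$ or remain bounded by $\sup_{z\in C}\|z\|$, and one then argues that $Orb(T,C)$ cannot be dense. But that is essentially a reproof of Feldman's theorem, and your sketch stops well short of carrying it out.
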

\begin{proof}
Let us prove assertion (i). Suppose first that $T$ is compact. If $T$ is a $J$-class operator, there exists a non-zero vector $x\in X$ so that $J(x)=X$. It is clear
that there exists a bounded set $C\subset X$ such that the set $Orb(T,C) $ is dense in $X$. Then according to Proposition 4.4 in \cite{Fel1} no component of the
spectrum, $\sigma (T)$, of $T$ can be contained in the open unit disk. However, for compact operators the singleton $\{ 0\}$ is always a component of the spectrum and
this gives a contradiction.

We proceed with the proof of the second statement. Suppose now that $T$ is hyponormal. If $T$ is a $J$-class operator, there exists a non-zero vector $h\in H$ so that
$J(h)=H$. Therefore there exists a bounded set $C\subset H$ which is bounded away from zero (since $h\neq 0$) such that the set $Orb(T,C)$ is dense in $X$. The last
contradicts Theorem 5.10 in \cite{Fel1}. The case of a positive operator is an easy exercise and is left to the reader.
\end{proof}

Below we prove that any operator acting on a finite dimensional space cannot be $J$-class operator.

\begin{proposition}
Fix any positive integer $l$ and let $A:{\mathbb{C}}^l \rightarrow {\mathbb{C}}^l$ be a linear map. Then $A$ is
not a $J$-class operator. In fact $J(x)^o=\emptyset$ for every $x\in {\mathbb{C}}^l\setminus \{ 0\}$.
\end{proposition}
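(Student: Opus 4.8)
The plan is to argue by contradiction, using the spectral information supplied by Corollary 3.4 together with the elementary fact that on a finite dimensional space ``dense range'' coincides with ``invertible''. So suppose that $J(x)^o\neq\emptyset$ for some non-zero $x\in\mathbb{C}^l$. By Corollary 3.4 (whose proof, in the range $|\lambda|\le 1$, never uses cyclicity) the operator $A-\lambda I$ has dense range for every $\lambda\in\mathbb{C}$ with $|\lambda|\le 1$; since $\mathbb{C}^l$ is finite dimensional this forces $A-\lambda I$ to be invertible for every such $\lambda$. Hence the spectrum $\sigma(A)$ is disjoint from the closed unit disk, i.e. every eigenvalue of $A$ has modulus strictly greater than $1$.

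The next step is to convert this into a norm estimate for $A^{-1}$. Since $A$ is invertible and every eigenvalue of $A^{-1}$ has modulus $<1$, the spectral radius of $A^{-1}$ satisfies $r(A^{-1})<1$; by Gelfand's formula $\|A^{-n}\|^{1/n}\to r(A^{-1})$, so there exist $\rho\in(0,1)$ and $C>0$ with $\|A^{-n}\|\le C\rho^{n}$ for all $n\ge 1$, and in particular $\|A^{-n}\|\to 0$ as $n\to\infty$.

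Finally I would derive the contradiction. Since $J(x)^o\neq\emptyset$ the set $J(x)$ is non-empty, so pick $y\in J(x)$; then there are $x_n\to x$ and a strictly increasing sequence of positive integers $\{k_n\}$ with $A^{k_n}x_n\to y$. The sequence $\{A^{k_n}x_n\}$ is bounded, say $\|A^{k_n}x_n\|\le M$, and, $A$ being invertible, $x_n=A^{-k_n}(A^{k_n}x_n)$, whence $\|x_n\|\le\|A^{-k_n}\|\,M\to 0$ because $k_n\to\infty$. Thus $x=\lim_n x_n=0$, contradicting $x\neq 0$. Therefore $J(x)^o=\emptyset$ for every $x\in\mathbb{C}^l\setminus\{0\}$; since $\mathbb{C}^l$ itself has non-empty interior, $J(x)\neq\mathbb{C}^l$ for every non-zero $x$, so $A$ is not a $J$-class operator.

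I do not expect a serious obstacle here; the only point requiring a word of care is that Corollary 3.4 is being applied without any cyclicity hypothesis, which is legitimate since the portion of the proof of Lemma 3.2 handling $|\lambda|\le 1$ does not invoke cyclicity. A self-contained alternative would be to put $A$ in Jordan form and observe directly that, for $x_n\to x\neq 0$, the iterates $A^{k_n}x_n$ cannot remain bounded unless every eigenvalue in the support of $x$ has modulus $>1$, and then run the same $A^{-k_n}$ argument; but routing the spectral step through Corollary 3.4 is shorter.
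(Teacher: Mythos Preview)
Your proof is correct and takes a genuinely different route from the paper's.

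The paper argues directly via Jordan canonical form: reducing to a single Jordan block with eigenvalue $\lambda$, it disposes of $|\lambda|<1$ trivially, handles $|\lambda|=1$ by observing that the last coordinate of $A^{k_n}x_n$ is confined to a circle, and for $|\lambda|>1$ carries out an explicit coordinate computation (written out for $l=3$) to force each coordinate of $x$ to vanish. Your argument instead routes the spectral information through Corollary~3.4: from $J(x)^o\neq\emptyset$ you extract that $A-\lambda I$ has dense range for every $|\lambda|\le 1$, upgrade this to invertibility using finite-dimensionality, conclude $\sigma(A)\subset\{|\lambda|>1\}$, and then finish with the clean inequality $\|x_n\|\le\|A^{-k_n}\|\cdot\|A^{k_n}x_n\|\to 0$. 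This is shorter and more conceptual, and in fact anticipates the mechanism behind Propositions~5.10--5.12 later in the paper. What the paper's hands-on approach buys is self-containment: it does not appeal to Lemma~3.2, whose \emph{real} case actually invokes Proposition~5.5. Your use is not circular, since you only need the complex $|\lambda|\le 1$ portion of Lemma~3.2, which is independent of Proposition~5.5; but it is worth flagging that your route would not transfer verbatim to a real-scalar analogue without reproving that portion of Corollary~3.4 directly.
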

\begin{proof}
By the Jordan's canonical form theorem for $A$ we may assume that $A$ is a Jordan block with eigenvalue $\lambda\in\mathbb{C}$. Assume on the contrary that there
exists a non-zero vector $x\in {\mathbb{C}}^{l}$ with coordinates $z_{1},\ldots ,z_{l}$ such that $J(x)^o=\emptyset$. If $\{ x_{n}\}\in {\mathbb{C}}^{l}$ is such that
$x_{n}\rightarrow x$ and $z_{n1},\ldots ,z_{nl}$ be the corresponding coordinates to $x_{n}$ then the $m$-th coordinate of $A^{n}x_{n}$ equals to
\begin{equation*}
\sum_{k=0}^{l-m} \left(
\begin{array}{c}
  n \\
  k \\
\end{array}
\right) \lambda^{n-k} z_{n(m+k)}.
\end{equation*}

If $| \lambda | <1$ then $J(x)=\{ 0\}$. It remains to consider the case $| \lambda | \geq 1$. Suppose $z_{l}\neq 0$. Then, for every strictly increasing sequence of
positive integers $\{ k_{n}\}$ the possible limit points of the sequence $\{ \lambda^{k_n} z_{nl} \}$ are: either $\infty$ in case $| \lambda | >1$ or a subset of the
circumference $\{ z\in \mathbb{C} : |z|=|z_{l}|\}$ in case $| \lambda | =1$. This leads to a contradiction since $J(x)^{o}\neq\emptyset$. Therefore, the last
coordinate $z_{l}$ of the non-zero vector $x\in {\mathbb{C}}^{l}$ should be $0$. In case $| \lambda | =1$ and since $z_{l}=0$ the only limit point of $\{
\lambda^{k_n} z_{nl} \}$ is $0$ for every strictly increasing sequence of positive integers $\{ k_{n}\}$. So $J(x)^{o}\subset {\mathbb{C}}^{l-1}\times \{ 0\}$, a
contradiction. Assume now that $| \lambda | >1$. For the convenience of the reader we give the proof in the case $l=3$. Take $y=(y_{1},y_{2},y_{3})\in J(x)$. There
exist a strictly increasing sequence $\{ k_{n}\}$ of positive integers and a sequence $\{ x_n\}\subset {\mathbb{C}}^{3}$ such that
$x_{n}=(x_{n1},x_{n2},x_{n3})\rightarrow (z_{1},z_{2},0)=x$ and $A^{k_{n}}x_{n}\rightarrow y$. Let $y_n=(y_{n1},y_{n2},y_{n3})=A^{k_{n}}x_{n}$. Hence we have
\begin{equation*}
\begin{array}{l}
y_{n3}= \lambda^{k_n}x_{n1} + k_{n} \lambda^{k_{n}-1}x_{n2} + \frac{k_{n}(k_{n}-1)}{2}\lambda^{k_{n}-2}x_{n3}\\
y_{n2}= \lambda^{k_n}x_{n2} + k_{n} \lambda^{k_{n}-1}x_{n3} \\
y_{n1}= \lambda^{k_n}x_{n3}.
\end{array}
\end{equation*}
Since $y_{n3}= \lambda^{k_n}x_{n3}\rightarrow y_{3}$ then $k_{n}(k_{n}-1)x_{n3}\rightarrow 0$. From $y_{n2}\rightarrow y_{2}$ we get
$\frac{y_{n2}}{k_{n}}=\frac{\lambda^{k_n}}{k_{n}^{2}} \, k_{n}x_{n2} + \lambda^{k_{n}-1}x_{n3} \rightarrow 0$. Using the fact that $\lambda^{k_n}x_{n3}\rightarrow
y_{3}$ it follows that the sequence $\{ \frac{\lambda^{k_n}}{k_{n}^{2}} \, k_{n}x_{n2} \}$ converges to a finite complex number, hence $k_{n}x_{n2}\rightarrow 0$. The
last implies $x_{n2}\rightarrow 0$, therefore $z_{2}=0$. We have $ x_{n1}=\frac{y_{n3}}{\lambda^{k_n}}-\frac{1}{\lambda} k_{n}x_{n2} -\frac{1}{2} \lambda^{2}
k_{n}(k_{n}-1)x_{n3} $. Observing that each one term on the right hand side in the previous equality goes to $0$, since $y_{n3}\rightarrow y_{3}$, we arrive at
$z_{1}=0$. Therefore $x=0$ which is a contradiction.
\end{proof}

\begin{remark}
The previous result does not hold in general if we remove the hypothesis that $A$ is linear even if the dimension of the space is $1$. It is well known that the
function $f:(0,1)\rightarrow (0,1)$ with $f(x)=4x(1-x)$ is chaotic, see \cite{De}. Consider any homeomorphism $g:(0,1)\rightarrow \mathbb{R}$. Take
$h=gfg^{-1}:\mathbb{R}\rightarrow \mathbb{R}$. Then it is obvious that there is a $G_{\delta}$ and dense set of points with dense orbits in $\mathbb{R}$. Applying
Theorem 3.1 (observe that this corollary holds without the assumption of linearity for $T$) we get that $J(x)=\mathbb{R}$, for every $x\in\mathbb{R}$.
\end{remark}

It is well known, see \cite{HeKi}, that if $T$ is a hypercyclic and invertible operator, its inverse $T^{-1}$ is hypercyclic. On the other hand, as we show below, the
previously mentioned result fails for $J$-class operators.
\begin{proposition}
There exists an invertible $J$-class operator $T$ acting on a Banach space $X$ so that its inverse $T^{-1}$ is not a $J$-class operator.
\end{proposition}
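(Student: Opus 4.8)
The plan is to exhibit a concrete example on a (necessarily non-separable) Banach space. Write $B$ and $F$ for the backward and forward shifts on a sequence space, set $e_{1}=(1,0,0,\dots)$, and let $x_{1}$ denote the first coordinate of a vector $x$. I would take
\[
X=\ell^{\infty}(\mathbb{N})\oplus c_{0}(\mathbb{N}),\qquad T(x\oplus y)=(2Bx)\oplus\Bigl(\tfrac12 Fy+x_{1}e_{1}\Bigr).
\]
The first routine checks are that $T$ is bounded and maps $X$ into itself — the second summand $\tfrac12 Fy+x_{1}e_{1}$ remains in $c_{0}(\mathbb{N})$ — and that $T$ is invertible, with
\[
T^{-1}(u\oplus v)=\Bigl(v_{1},\tfrac{u_{1}}{2},\tfrac{u_{2}}{2},\dots\Bigr)\oplus(2v_{2},2v_{3},2v_{4},\dots).
\]
The idea behind the choice of $T$ is that $2B$ is surjective with kernel $\mathbb{C}e_{1}$ while $\tfrac12 F$ is injective with one-dimensional cokernel, and the coupling term $x_{1}e_{1}$ welds the two blocks into an invertible operator; the reason the second summand has to be $c_{0}(\mathbb{N})$ rather than $\ell^{\infty}(\mathbb{N})$ will be visible in the last step.

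Next I would prove by induction the formula
\[
T^{k}(x\oplus y)=(2B)^{k}x\ \oplus\ \Bigl[(\tfrac12 F)^{k}y+\sum_{j=0}^{k-1}2^{\,2j-k+1}\,x_{j+1}\,e_{k-j}\Bigr]
\]
and use it to show that $J_{T}(e_{1}\oplus 0)=X$, so that $T$ is a $J$-class operator. Fix $u\in\ell^{\infty}(\mathbb{N})$ and a finitely supported $v\in c_{0}(\mathbb{N})$, say with support in $[1,N]$. Choose $k_{n}\to\infty$ with $k_{n}>N$, and let $x^{(n)}$ coincide with $e_{1}$ except that $x^{(n)}_{k_{n}+i}=u_{i}/2^{k_{n}}$ for $i\ge 1$ and $x^{(n)}_{k_{n}-p+1}=v_{p}\,2^{\,2p-1-k_{n}}$ for $p$ in the support of $v$; put $y^{(n)}=0$. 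Substituting into the formula above, the term $(2B)^{k_{n}}x^{(n)}$ equals $u$ and the feedback sum equals $v+2^{\,1-k_{n}}e_{k_{n}}$, so $T^{k_{n}}(x^{(n)}\oplus 0)=u\oplus(v+2^{\,1-k_{n}}e_{k_{n}})\to u\oplus v$, while $\|x^{(n)}-e_{1}\|_{\infty}\le\max(\|v\|\,2^{2N-1},\|u\|)\,2^{-k_{n}}\to 0$, so $x^{(n)}\oplus 0\to e_{1}\oplus 0$. Hence $u\oplus v\in J_{T}(e_{1}\oplus 0)$ for every finitely supported $v$; since these are norm-dense in $c_{0}(\mathbb{N})$, Lemma 2.5 gives $J_{T}(e_{1}\oplus 0)=\ell^{\infty}(\mathbb{N})\oplus c_{0}(\mathbb{N})=X$.

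It remains to see that $T^{-1}$ is not a $J$-class operator. The analogous induction yields
\[
(T^{-1})^{k}(u\oplus v)=\Bigl[(\tfrac12 F)^{k}u+\sum_{j=0}^{k-1}2^{\,2j-k+1}\,v_{j+1}\,e_{k-j}\Bigr]\ \oplus\ (2B)^{k}v,
\]
and the crucial observation is that the $k$-th coordinate of the first summand is exactly $2^{\,1-k}v_{1}$. Suppose $T^{-1}$ were a $J$-class operator, say $J_{T^{-1}}(b)=X$ for some non-zero $b=b_{1}\oplus b_{2}$. Then, taking the vector $\mathbf{1}\oplus 0$ with $\mathbf{1}=(1,1,1,\dots)\in\ell^{\infty}(\mathbb{N})$, there would be $u^{(n)}\oplus v^{(n)}\to b_{1}\oplus b_{2}$ and $k_{n}\to\infty$ with $(T^{-1})^{k_{n}}(u^{(n)}\oplus v^{(n)})\to\mathbf{1}\oplus 0$; evaluating the first component at the coordinate $k_{n}$ would force $2^{\,1-k_{n}}v^{(n)}_{1}\to 1$. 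But $(v^{(n)}_{1})_{n}$ is bounded because $v^{(n)}\to b_{2}$, so $2^{\,1-k_{n}}v^{(n)}_{1}\to 0$ — a contradiction. Thus $T^{-1}$ is not a $J$-class operator, which proves the proposition.

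The only genuinely delicate part is the bookkeeping inside the identity $J_{T}(e_{1}\oplus 0)=X$: one must place the finitely many corrections to $e_{1}$ at the coordinates $k_{n}-p+1$ and $k_{n}+i$, which (once $k_{n}>N$) are pairwise distinct and distinct from the coordinate $1$, so that the feedback sum in the formula for $T^{k}$ reproduces $v$ exactly, the $(2B)^{k_{n}}$-part reproduces $u$ exactly, and all corrections have size $O(2^{-k_{n}})$. The constants $2$ and $\tfrac12$ play no special role and may be replaced by any $\lambda,\mu$ with $|\lambda|>1>|\mu|>0$.
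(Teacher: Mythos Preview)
Your argument is correct, but it is considerably more elaborate than the paper's and rests on a small misconception. The paper simply takes $T=\lambda I_{\mathbb{C}}\oplus S$ on $\mathbb{C}\oplus Y$, where $S$ is any invertible hypercyclic operator on a separable Banach space $Y$ and $|\lambda|>1$. By the computation already done in Remark~3.6, $J_T(0\oplus y)=\mathbb{C}\oplus Y$ for every hypercyclic vector $y$ of $S$, so $T$ is $J$-class; and $T^{-1}=\lambda^{-1}I_{\mathbb{C}}\oplus S^{-1}$ fails to be $J$-class because the first coordinate of $(T^{-1})^{k_n}(a_n\oplus b_n)$ is $\lambda^{-k_n}a_n\to 0$ for any bounded sequence $(a_n)$, forcing $J_{T^{-1}}(a\oplus b)\subset\{0\}\oplus Y$. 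This is two lines, on a \emph{separable} space.

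So your opening remark that the example must ``necessarily'' live on a non-separable space is false, and this misapprehension seems to be what pushed you toward the heavier machinery. Your coupled-shift construction on $\ell^{\infty}(\mathbb{N})\oplus c_0(\mathbb{N})$ is internally sound --- the iterate formulas, the placement of the corrections at coordinates $k_n-p+1$ and $k_n+i$, and the contradiction via the $k_n$-th coordinate of the first summand of $(T^{-1})^{k_n}$ all check out --- but it costs an induction, explicit bookkeeping, and a density argument, none of which the problem actually demands. What your construction does buy is an example in which neither direct summand is finite-dimensional and the operator is not block-diagonal; that is of some independent interest, but for the bare statement the paper's diagonal example is far more economical.
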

\begin{proof}
Take any hypercyclic invertible operator $S$ acting on a Banach space $Y$ and consider the operator $T=\lambda I_{\mathbb{C}} \oplus S: \mathbb{C} \oplus Y\rightarrow
\mathbb{C} \oplus Y$, for any fixed complex number $\lambda $ with $|\lambda |>1$. Then, arguing as in Remark 3.6 it is easy to show that $T$ is a $J$-class operator.
However its inverse $T^{-1}={\lambda }^{-1}I_{\mathbb{C}} \oplus S^{-1}$ is not a $J$-class operator since $|{\lambda }^{-1}|<1$.
\end{proof}

Salas in \cite{Salas0} answering a question of D. Herrero constructed a hypercyclic operator $T$ on a Hilbert
space such that its adjoint $T^*$ is also hypercyclic but $T\oplus T^*$ is not hypercyclic. In fact the
following (unpublished) result of Deddens holds:  \textit{suppose $T$ is an operator, acting on a complex
Hilbert space, whose matrix with respect to some orthonormal basis, consists entirely of real entries. Then
$T\oplus T^*$ is not cyclic}. A proof of Deddens result can be found in the expository paper \cite{Sh}.
Recently, Montes and Shkarin, see \cite{MoSh}, extended Deddens' result to the general setting of Banach space
operators. Hence it is natural to ask if there exists an operator $T$ such that $T\oplus T^*$ is a $J$-class
operator. Below we show that this is not the case.

\begin{proposition}
Let $T$ be an operator acting on a Hilbert space $H$. Then $T\oplus T^*$ is not a $J$-class operator.
\end{proposition}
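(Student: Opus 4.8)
The plan is to argue by contradiction, exploiting the adjoint identity $\langle Tx,y\rangle=\langle x,T^*y\rangle$ together with the joint continuity of the inner product. Suppose $T\oplus T^*$ is a $J$-class operator, so that there is a \emph{non-zero} vector $(u,v)\in H\oplus H$ with $J_{T\oplus T^*}(u,v)=H\oplus H$. Fix an arbitrary pair $(a,b)\in H\oplus H$. By the definition of the extended limit set there are a sequence $(u_n,v_n)\to(u,v)$ in $H\oplus H$ and a strictly increasing sequence $\{k_n\}$ of positive integers such that $T^{k_n}u_n\to a$ and $(T^*)^{k_n}v_n\to b$.

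Next I would evaluate the scalar $\langle T^{k_n}u_n,v_n\rangle$ in two ways. On one hand, since $T^{k_n}u_n\to a$ and $v_n\to v$, the joint continuity of $\langle\cdot,\cdot\rangle$ (a one-line estimate via Cauchy--Schwarz) gives $\langle T^{k_n}u_n,v_n\rangle\to\langle a,v\rangle$. On the other hand, $\langle T^{k_n}u_n,v_n\rangle=\langle u_n,(T^*)^{k_n}v_n\rangle$, and since $u_n\to u$ and $(T^*)^{k_n}v_n\to b$ this quantity also converges, now to $\langle u,b\rangle$. Hence $\langle a,v\rangle=\langle u,b\rangle$ for every choice of $a,b\in H$.

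To finish, take $b=0$: then $\langle a,v\rangle=0$ for all $a\in H$, so $v=0$. Take $a=0$: then $\langle u,b\rangle=0$ for all $b\in H$, so $u=0$. Thus $(u,v)=(0,0)$, contradicting the fact that a $J$-class vector must be non-zero. Therefore $T\oplus T^*$ cannot be a $J$-class operator.

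There is no real obstacle in this argument; the only thing requiring a moment's thought is recognizing that the adjoint identity $\langle T^{k_n}u_n,v_n\rangle=\langle u_n,(T^*)^{k_n}v_n\rangle$ is precisely the bridge that couples the two coordinates of the approximating sequences, and that both sides pass to the limit because the inner product is continuous in each variable (indeed jointly). This is, in spirit, the same mechanism underlying Deddens' theorem that $T\oplus T^*$ is never cyclic when $T$ has a real matrix, transported here to the setting of $J$-sets; in the real case the pairing is genuinely bilinear and the computation is, if anything, simpler.
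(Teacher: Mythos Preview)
Your proof is correct and rests on the same mechanism as the paper's: the adjoint identity $\langle T^{k_n}u_n,v_n\rangle=\langle u_n,(T^*)^{k_n}v_n\rangle$ together with joint continuity of the inner product. The difference is organizational. The paper fixes the single target $(y,x)$ (swapping the coordinates of the $J$-class vector $x\oplus y$), obtains $\|y\|^2=\|x\|^2$ in the limit, and then must split into cases: if one of $x,y$ is zero this immediately forces the other to be zero, while if both are non-zero one first rescales (using that $J(\lambda x\oplus\mu y)=H\oplus H$ for all non-zero $\lambda,\mu$) so that $\|x\|\neq\|y\|$, reaching a contradiction. Your version is more economical: by allowing the target $(a,b)$ to be arbitrary you derive the universal identity $\langle a,v\rangle=\langle u,b\rangle$, and the specializations $b=0$ and $a=0$ kill $v$ and $u$ in one stroke, with no case analysis or rescaling needed. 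Both arguments are short; yours is the cleaner packaging of the same idea.
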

\begin{proof}
We argue by contradiction, so assume that $T\oplus T^*$ is a $J$-class operator. Hence there exist vectors $x,y\in H$ such that $J(x\oplus y)=H\oplus H$ and  $x\oplus
y\neq 0$.

\textit{Case I}: suppose that one of the vectors $x,y$ is zero. Without loss of generality assume $x=0$. Then there exist sequences $\{ x_n\}, \{y_n\} \subset H$ and
a strictly increasing sequence of positive integers $\{ k_n \}$ such that $x_n\rightarrow x=0$, $y_n\rightarrow y$, $T^{k_n}x_n\rightarrow y$ and
$T^{*k_n}y_n\rightarrow x=0$. Taking limits to the following equality $<T^{k_n}x_n,y_n>=<x_n,T^{*k_n}y_n>$ we get that $\| y\| =\| x\| =0$ and hence $y=0$. Therefore
$x\oplus y=0$, which yields a contradiction.

\textit{Case II}: suppose that $x\neq 0$ and $y\neq 0$. Let us show first that $J(\lambda x\oplus \mu y)=H\oplus H$ for every $\lambda ,\mu \in \mathbb{C} \setminus
\{ 0\}$. Indeed, fix $\lambda ,\mu \in \mathbb{C} \setminus \{ 0\}$. Take any $z,w \in H$. Since $J(x\oplus y)=H\oplus H$, there exist sequences $\{ x_n\}, \{y_n\}
\subset H$ and a strictly increasing sequence of positive integers $\{ k_n \}$ such that $x_n\rightarrow x$, $y_n\rightarrow y$, $T^{k_n}x_n\rightarrow {\lambda
}^{-1}z$ and $T^{*k_n}y_n\rightarrow {\mu }^{-1}w$. The last implies that $z\oplus w\in J(\lambda x\oplus \mu y)$, hence $J(\lambda x\oplus \mu y)=H\oplus H$. With no
loss of generality we may assume that $\| x\| \neq \| y\| $ (because if $\| x\| =\| y\| $, by multiplying with a suitable $\lambda \in \mathbb{C} \setminus \{ 0\}$ we
have $\| \lambda x\| \neq \| y\| $ and $J(\lambda x\oplus y)=H\oplus H$). Then we proceed as in Case I and arrive at a contradiction. The details are left to the
reader.
\end{proof}

Below we establish that, for a quite large class of operators, an operator $T$ is a $J$-class operator if and only if $J(0)=X$. What we need to assume is that there
exists at least one non-zero vector having ``regular" orbit under $T$.
\begin{proposition}
Let $T:X\rightarrow X$ be an operator on a Banach space $X$.
\begin{enumerate}
\item[(i)] For every positive integer $m$ it holds that $J_T(0)=J_{T^m}(0)$.

\item[(ii)] Suppose that $z$ is a non-zero periodic point for $T$. Then the following are equivalent.
\begin{enumerate}
\item[(1)] $T$ is a $J$-class operator;

\item[(2)] $J(0)=X$;

\item[(3)] $J(z)=X$.
\end{enumerate}

\item[(iii)] Suppose there exist a non-zero vector $z\in X$, a vector $w\in X$ and a sequence $\{ z_n\} \subset X$ such that $z_n\to z$ and $T^nz_n\to w$. Then the
following are equivalent.
\begin{enumerate}
\item[(1)] $T$ is a $J$-class operator;

\item[(2)] $J(0)=X$;

\item[(3)] $J(z)=X$.
\end{enumerate}
In particular, this statement holds for operators with non trivial kernel or for operators having at least one non-zero fixed point.
\end{enumerate}
\end{proposition}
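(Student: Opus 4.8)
The plan is to treat (i) first, since it is the engine behind (ii), and then run essentially the same logical skeleton for (ii) and (iii).

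\smallskip
\noindent\textbf{Part (i).} The inclusion $J_{T^m}(0)\subseteq J_T(0)$ is immediate: if $\{k_n\}$, $\{x_n\}$ witness $y\in J_{T^m}(0)$, then $\{mk_n\}$, $\{x_n\}$ witness $y\in J_T(0)$. For the reverse inclusion, start from witnesses $x_n\to 0$, a strictly increasing $\{k_n\}$, and $T^{k_n}x_n\to y$. Write $k_n=mq_n+r_n$ with $0\le r_n<m$, apply the pigeonhole principle to pass to a subsequence along which $r_n$ equals a constant $r$, and rewrite $T^{k_n}x_n=(T^m)^{q_n}(T^rx_n)$. By continuity of $T^r$ we have $T^rx_n\to T^r0=0$, and since $k_n\to\infty$ forces $q_n\to\infty$, one further passage to a subsequence makes $\{q_n\}$ strictly increasing. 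These data exhibit $y\in J_{T^m}(0)$.

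\smallskip
\noindent\textbf{Parts (ii) and (iii): common skeleton.} In both parts the implication $(3)\Rightarrow(1)$ is trivial because $z\neq 0$, and $(1)\Rightarrow(2)$ is exactly Lemma 2.11 (if $J(x)=X$ for some non-zero $x$, then $J(0)=X$). So the whole content is $(2)\Rightarrow(3)$: deduce $J(z)=X$ from $J(0)=X$.

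\smallskip
\noindent\textbf{The implication $(2)\Rightarrow(3)$.} For (ii), let $p$ be a period of $z$, so $T^pz=z$. By part (i), $J_{T^p}(0)=J_T(0)=X$. Given an arbitrary $y\in X$, the vector $y-z$ lies in $J_{T^p}(0)$, so there are $x_n\to 0$ and a strictly increasing $\{q_n\}$ with $(T^p)^{q_n}x_n\to y-z$. Since $T^{pq_n}z=(T^p)^{q_n}z=z$, we get $T^{pq_n}(z+x_n)=z+(T^p)^{q_n}x_n\to y$ while $z+x_n\to z$; hence $y\in J(z)$, and as $y$ was arbitrary, $J(z)=X$. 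For (iii), fix the given data $z_n\to z$, $T^nz_n\to w$, and an arbitrary $y\in X$; since $y-w\in J(0)$ there are $u_m\to 0$ and a strictly increasing $\{j_m\}$ with $T^{j_m}u_m\to y-w$. Then $z_{j_m}+u_m\to z$ and, because $\{T^{j_m}z_{j_m}\}$ is a subsequence of the convergent sequence $\{T^nz_n\}$, $T^{j_m}(z_{j_m}+u_m)=T^{j_m}z_{j_m}+T^{j_m}u_m\to w+(y-w)=y$. Hence $y\in J(z)$ and $J(z)=X$. Finally, the \emph{in particular} clause follows by producing the data of (iii): if $z\in\ker T\setminus\{0\}$ take $z_n=z$ so that $T^nz_n=0\to 0$; if $Tz=z$ with $z\neq 0$ take $z_n=z$ so that $T^nz_n=z\to z$ (this $z$ is also periodic, so (ii) applies as well).

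\smallskip
The argument involves no deep obstacle; the only points needing care are the repeated subsequence/pigeonhole bookkeeping in part (i), and the observation that in (ii) one should route through $T^p$ via part (i) in order to reach exponents that fix $z$, rather than trying to control residues of a general witnessing sequence directly.
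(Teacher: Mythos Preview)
Your proof is correct and follows essentially the same route as the paper's: the division-algorithm/pigeonhole argument for (i), Lemma~2.11 for $(1)\Rightarrow(2)$, and in $(2)\Rightarrow(3)$ the same shifts $z+x_n$ (via $T^p$ and part (i)) in (ii) and $z_{k_n}+x_n$ in (iii). If anything, you are slightly more careful than the paper in (i), where you explicitly pass to a further subsequence to make $\{q_n\}$ strictly increasing.
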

\begin{proof}
Let us first show item (i). Fix any positive integer $m$ and let $y\in J_T(0)$. There exist a strictly increasing sequence of positive integers $\{ k_n\}$ and a
sequence $\{ x_n \}$ in $X$ such that $x_n\to 0$ and $T^{k_n}x_n\to y$. Then for every $n$ there exist non-negative integers $l_n, \rho_n $ with $\rho_n \in \{
0,1,\ldots ,m-1\}$ such that $k_n=l_nm+\rho_n$. Hence without loss of generality we may assume that there is $\rho \in \{ 0,1,\ldots ,m-1\}$ such that $
k_n=l_nm+\rho$ for every $n$. The last implies that $ T^{ml_n}(T^{\rho }x_n)\to y$ and $T^{\rho }x_n\to 0$ as $n\to \infty$. Hence $J_T(0)\subset J_{T^m}(0)$. The
converse inclusion is obvious. Let us show assertion (ii). That (1) implies (2) is an immediate consequence of Lemma 2.11. We shall prove that (2) gives (3). Suppose
that $N$ is the period of the periodic point $z$. Fix $w\in X$. Assertion (i) yields that $J_{T^N}(0)=X$. Hence there exist a strictly increasing sequence of positive
integers $\{ m_n\}$ and a sequence $\{ y_n \}$ in $X$ such that $ y_n\to 0$ and $T^{Nm_n}y_n\to w-z$. It follows that $ y_n+z\to z$ and $T^{Nm_n}(y_n+z) \to w$, from
which we conclude that $J_T(z)=X$. This proves assertion (ii). We proceed with the proof of assertion (iii). It only remains to show that (2) implies (3). Take any
$y\in X$. There exist a sequence $\{ x_n \} \subset X$ and a strictly increasing sequence $\{ k_n\}$ of positive integers such that $x_n \rightarrow 0$ and
$T^{k_n}x_n\rightarrow y-w$. Our hypothesis implies that $ x_n+z_{k_n}\rightarrow z$ and $T^{k_n}(x_n+z_{k_n})\rightarrow y$. Hence $y\in J(z)$.
\end{proof}

In the following proposition we provide a construction of $J$-class operators which are not hypercyclic.

\begin{proposition}
Let $X$ be a Banach space and let $Y$ be a separable Banach space. Consider an operator $S:X\rightarrow X$ so that $\sigma (S)\subset\{ \lambda : |\lambda |>1\}$. Let
also $T:Y\rightarrow Y$ be a hypercyclic operator. Then

\begin{enumerate}
\item[(i)]$S\oplus T:X\oplus Y\rightarrow X\oplus Y$ is a $J$-class operator but not a hypercyclic operator and

\item[(ii)] the set $\{ x\oplus y: x\in X,y\in Y\,\,\mbox{such that}\,\, J(x\oplus y)=X\oplus Y\}$ forms an infinite dimensional closed subspace of $X\oplus Y$ and in
particular
\end{enumerate}
\begin{equation*}
\{ x\oplus y: x\in X,y\in Y\,\,\mbox{such that}\,\, J(x\oplus y)=X\oplus Y\}=\{ 0\}\oplus Y.
\end{equation*}
\end{proposition}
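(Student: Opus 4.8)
The plan is to push everything onto the component $S$, exploiting that $\sigma(S)$ lies strictly outside the closed unit disk. First I would record the single analytic fact needed: since $\sigma(S^{-1})=\{\lambda^{-1}:\lambda\in\sigma(S)\}$ sits inside the open unit disk, the spectral radius $r(S^{-1})$ is $<1$, so by the spectral radius formula $\|S^{-n}\|\to 0$; consequently $\|S^{n}x\|\ge\|x\|/\|S^{-n}\|\to\infty$ for every $x\neq 0$ (assuming, as is implicitly required, that $X\neq\{0\}$).

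Next, for part (i) and for the inclusion $\{0\}\oplus Y\subseteq\{x\oplus y:J(x\oplus y)=X\oplus Y\}$ of (ii) at once, I would fix an arbitrary $y\in Y$ and show $J_{S\oplus T}(0\oplus y)=X\oplus Y$; this refines the computation in Remark 3.6. Let $u\oplus w\in X\oplus Y$. Since $T$ is hypercyclic, Theorem 3.1 gives $J_{T}(y)=Y$, so there are a strictly increasing sequence $\{k_n\}$ of positive integers and vectors $y_n\to y$ with $T^{k_n}y_n\to w$. Setting $x_n:=S^{-k_n}u$ we get $x_n\to 0$ (because $\|S^{-k_n}\|\to 0$) and $S^{k_n}x_n=u$, hence $x_n\oplus y_n\to 0\oplus y$ while $(S\oplus T)^{k_n}(x_n\oplus y_n)\to u\oplus w$, so $u\oplus w\in J(0\oplus y)$. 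Thus $J(0\oplus y)=X\oplus Y$ for every $y\in Y$; in particular $0\oplus y\ne 0$ for $y\ne 0$, so $S\oplus T$ is a $J$-class operator, and $\{0\}\oplus Y$ is contained in the set appearing in (ii).

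For the reverse inclusion, suppose $J(x\oplus y)=X\oplus Y$. Then $0\oplus 0\in J(x\oplus y)$, so there are a strictly increasing sequence $\{k_n\}$ and $x_n\oplus y_n\to x\oplus y$ with $(S\oplus T)^{k_n}(x_n\oplus y_n)\to 0$, i.e. $x_n\to x$ and $S^{k_n}x_n\to 0$; therefore $\|x_n\|\le\|S^{-k_n}\|\,\|S^{k_n}x_n\|\to 0$ and $x=0$. This identifies the set in (ii) as $\{0\}\oplus Y$, a closed linear subspace of $X\oplus Y$, infinite dimensional because a separable Banach space carrying a hypercyclic operator is infinite dimensional. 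Finally, $S\oplus T$ is not hypercyclic: a hypercyclic vector $x\oplus y$ would make $\{S^{n}x:n\ge 0\}$ dense in $X$, which is impossible since this orbit equals $\{0\}$ when $x=0$ and (by the first observation) escapes to infinity when $x\ne 0$, so in either case it meets no bounded ball infinitely often; alternatively, every vector with full $J$-set lies in $\{0\}\oplus Y$ and is therefore non-cyclic, so Theorem 3.3 rules out hypercyclicity.

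No serious obstacle arises; the one thing to watch is that the \emph{same} increasing sequence $\{k_n\}$ must serve both coordinates in the second paragraph, which is arranged by first extracting $\{k_n\}$ from the hypercyclicity of $T$ and only then defining $x_n=S^{-k_n}u$ along it via the invertibility of $S$.
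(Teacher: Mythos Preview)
Your proof is correct and follows the same overall mechanism as the paper: the spectral hypothesis gives $\|S^{-n}\|\to 0$, which you use both to hit arbitrary $X$-targets via $x_n=S^{-k_n}u$ and to force $x=0$ in the reverse inclusion. Two places where you streamline the paper's argument are worth noting. For the inclusion $\{0\}\oplus Y\subseteq\{x\oplus y:J(x\oplus y)=X\oplus Y\}$, the paper first proves $J(0\oplus y)=X\oplus Y$ only for \emph{hypercyclic} $y$ (so it can keep $y_n\equiv y$) and then passes to arbitrary $y$ by density and Lemma~2.5; your appeal to Theorem~3.1 ($J_T(y)=Y$ for every $y$) does this in one step. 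For the reverse inclusion, the paper actually proves the stronger fact $J_S(x)=\emptyset$ for $x\neq 0$ via Propositions~2.9 and~2.10 (passing through $J^-$ and $L^-$); your direct estimate $\|x_n\|\le\|S^{-k_n}\|\,\|S^{k_n}x_n\|\to 0$ from $0\in J(x\oplus y)$ is more elementary and suffices for the statement, though it does not yield that $J(x\oplus w)$ is empty. Your justification that $S\oplus T$ is not hypercyclic is also more explicit than the paper's, which simply records it as an immediate consequence of the spectral assumption.
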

\begin{proof}
We first prove assertion (i). That $S\oplus T$ is not a hypercyclic operator is an immediate consequence of the fact that $\sigma (S)\subset\{ \lambda : |\lambda
|>1\}$. Let us now prove that $S\oplus T$ is a $J$-class operator. Fix any hypercyclic vector $y\in Y$ for $T$. We shall show that $J(0\oplus y)=X\oplus Y$. Take
$x\in X$ and $w\in Y$. Since $\sigma (S)\subset\{ \lambda : |\lambda |>1\}$ it follows that $S$ is invertible and $ \sigma (S^{-1})\subset\{ \lambda : |\lambda
|<1\}$. Hence the spectral radius formula implies that $\| S^{-n}\|\rightarrow 0$. Therefore $S^{-n}x\rightarrow 0$. Since $y$ is hypercyclic for $T$ there exists a
strictly increasing sequence of positive integers $\{ k_{n}\}$ such that $T^{k_{n} }y \rightarrow w$. Observe now that $(S\oplus T)^{k_n}(S^{-k_{n}}x\oplus y)=x\oplus
T^{k_n}y\rightarrow x\oplus w $ and $S^{-k_{n}}x\oplus y\rightarrow 0\oplus y $. We proceed with the proof of (ii). Fix any hypercyclic vector $y\in Y$ for $T$. From
the proof of (i) we get $J(0\oplus y)=X\oplus Y$. Since for every positive integer $n$ the vector $T^{n}y$ is hypercyclic for $T$, by the same reasoning as above we
have that $J(0\oplus T^{n}y)=X\oplus Y$. Using Lemma 2.5 and that $y$ is hypercyclic for $T$ we conclude that $J(0\oplus w)=X\oplus Y$ for every $w\in Y$. To finish
the proof, it suffices to show that if $x\in X\setminus \{ 0\}$ then for every $w\in Y$, $J(x\oplus w)\neq X$. In particular we will show that $J(x\oplus
w)=\emptyset$. Suppose there exists $h\in J^{+}(x)=J(x)$ (see Definition 2.2). Propositions 2.9 and 2.10 imply that $x\in J^{-}(h)=L^{-}(h)$ (since $S^{-1}$ is power
bounded). On the other hand $\|S^{-n} \|\rightarrow 0$ and therefore $x\in L^{-}(h)=\{ 0\}$, which is a contradiction.
\end{proof}

We next provide some information on the spectrum of a $J$-class operator. Recall that if $T$ is hypercyclic then every component of the spectrum $\sigma (T)$
intersects the unit circle, see \cite{Kit}. Although the spectrum of a $J$-class operator intersects the unit circle $\partial D$, see Proposition \ref{spectrum1}
below, it may admits components not intersecting $\partial D$. For instance consider the $J$-class operator $2B\oplus 3I$, where $B$ is the backward shift on
$l^2(\mathbb{N})$ and $I$ is the identity operator on $\mathbb{C}$.

\begin{proposition}\label{spectralradius}
Let $T:X\rightarrow X$ be an operator on a complex Banach space $X$. If $r(T)<1$, where $r(T)$ denotes the spectral radius of $T$, or $\sigma (T)\subset \{ \lambda :
|\lambda |
>1 \} $ then $T$ is not a $J$-class operator.
\end{proposition}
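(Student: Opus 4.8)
The plan is to treat the two hypotheses separately, reducing each to the elementary fact that when an operator contracts to $0$ in norm its extended limit sets are all $\{0\}$. So I would begin by assuming, towards a contradiction, that $T$ is a $J$-class operator; then there is a non-zero $x\in X$ with $J(x)=X$.

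For the case $r(T)<1$, I would invoke the spectral radius formula $\lim_{n}\|T^{n}\|^{1/n}=r(T)<1$ to conclude $\|T^{n}\|\to 0$, so that $T$ is power bounded and $T^{k_{n}}x\to 0$ along every strictly increasing sequence $\{k_{n}\}$ of positive integers; hence $L(x)=\{0\}$. Proposition 2.10 then gives $J(x)=L(x)=\{0\}$, forcing $X=\{0\}$ and contradicting $x\neq 0$.

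For the case $\sigma(T)\subset\{\lambda:|\lambda|>1\}$, I would first observe that $0\notin\sigma(T)$, so $T$ is invertible, and by the spectral mapping theorem $\sigma(T^{-1})=\{\lambda^{-1}:\lambda\in\sigma(T)\}\subset\{\mu:|\mu|<1\}$, whence $r(T^{-1})<1$. As in the previous case, $\|T^{-n}\|\to 0$ and $T^{-1}$ is power bounded. Now pick any $h\in J(x)=J^{+}(x)$. Proposition 2.9 yields $x\in J^{-}(h)$, and applying Proposition 2.10 to $T^{-1}$ gives $J^{-}(h)=L^{-}(h)$; since $T^{-n}h\to 0$ we get $L^{-}(h)=\{0\}$, so $x=0$, a contradiction.

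I do not expect a serious obstacle: this is precisely the device already used at the end of the proof of the preceding proposition. The only points that need a little care are noticing that the second hypothesis forces $T$ to be invertible — which is exactly what licenses the use of Propositions 2.9 and 2.10 for $T^{-1}$ — and keeping track of which operator each limit set is computed with respect to.
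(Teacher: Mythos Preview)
Your proof is correct and follows essentially the same route as the paper: for $r(T)<1$ the paper simply notes $\|T^{n}\|\to 0$ and concludes, while for $\sigma(T)\subset\{|\lambda|>1\}$ it refers back to the end of the proof of Proposition 5.10, which is exactly the Propositions 2.9/2.10 argument you wrote out in full. Your explicit remark that invertibility of $T$ is what licenses the use of Proposition 2.9 is a point the paper leaves implicit.
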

\begin{proof}
If $r(T)<1$ then we have $\| T^n\| \to 0$. Hence $T$ is not a $J$-class operator. If $\sigma (T)\subset \{ \lambda : |\lambda | >1 \} $ the conclusion follows by the
proof of Proposition 5.10.
\end{proof}

\begin{proposition}\label{spectrum1}
Let $X$ be a complex Banach space. If $T:X\to X$ is a $J$-class operator, it holds that $\sigma (T) \cap \partial D \neq \emptyset$.
\end{proposition}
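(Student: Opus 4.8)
The plan is to argue by contradiction and reduce the situation to Proposition \ref{spectralradius}. Assume $\sigma(T)\cap\partial D=\emptyset$. Then the two disjoint open sets $\{\lambda:|\lambda|<1\}$ and $\{\lambda:|\lambda|>1\}$ cover the compact set $\sigma(T)$, so the function $f$ that is identically $1$ on the open unit disk and identically $0$ on $\{\lambda:|\lambda|>1\}$ is holomorphic on an open neighbourhood of $\sigma(T)$. First I would invoke the Riesz--Dunford holomorphic functional calculus to set $P:=f(T)$; this is a bounded idempotent commuting with $T$, and the closed $T$-invariant subspaces $X_{1}:=P(X)$ and $X_{2}:=(I-P)(X)$ give a topological direct sum $X=X_{1}\oplus X_{2}$ with $\sigma(T|_{X_{1}})=\sigma(T)\cap\{\lambda:|\lambda|<1\}$ and $\sigma(T|_{X_{2}})=\sigma(T)\cap\{\lambda:|\lambda|>1\}$. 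Since $\sigma(T|_{X_{1}})$ is then a compact subset of the open unit disk, $r(T|_{X_{1}})<1$, and hence $\|(T|_{X_{1}})^{n}\|\to 0$ (possibly $X_{1}=\{0\}$, in which case this is vacuous).

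The heart of the matter is to push the hypothesis through the projection $P$. Let $x\neq 0$ with $J_{T}(x)=X$, and fix an arbitrary $y\in X$; choose $x_{n}\to x$ and a strictly increasing $\{k_{n}\}$ with $T^{k_{n}}x_{n}\to y$. Applying the continuous operator $P$ yields $Px_{n}\to Px$ and $(T|_{X_{1}})^{k_{n}}(Px_{n})=P(T^{k_{n}}x_{n})\to Py$, so $Py\in J_{T|_{X_{1}}}(Px)$. But $T|_{X_{1}}$ is power bounded with $(T|_{X_{1}})^{n}(Px)\to 0$, so Proposition 2.10 gives $J_{T|_{X_{1}}}(Px)=L_{T|_{X_{1}}}(Px)=\{0\}$. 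Therefore $Py=0$ for every $y\in X$, i.e. $P=0$ and $X_{1}=\{0\}$. Consequently $\sigma(T)=\sigma(T)\cap\{\lambda:|\lambda|>1\}\subseteq\{\lambda:|\lambda|>1\}$, and Proposition \ref{spectralradius} now forces $T$ not to be a $J$-class operator, contradicting the hypothesis.

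I do not expect a genuine obstacle here; the proof is short once the decomposition is in place. The one non-routine observation is that the Riesz spectral projection is exactly the device that isolates the contracting part of the spectrum, combined with the simple remark that applying a bounded projection to the sequences witnessing $y\in J_{T}(x)$ produces the corresponding statement for the restricted operator. The remaining ingredients---that $\sigma(T|_{X_{1}})$ sits in the open disk and hence $\|(T|_{X_{1}})^{n}\|\to 0$, and the power-bounded identity $J=L$ from Proposition 2.10---are standard, so the only care required is in stating the functional-calculus decomposition correctly (and handling the trivial case $X_{1}=\{0\}$, where one simply lands directly in Proposition \ref{spectralradius}).
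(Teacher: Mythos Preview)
Your proof is correct and follows essentially the same strategy as the paper: argue by contradiction, apply the Riesz spectral decomposition to split $X=X_{1}\oplus X_{2}$ according to the parts of $\sigma(T)$ inside and outside the closed unit disk, and reduce to Proposition~\ref{spectralradius}. The only cosmetic difference is that the paper phrases the key step as ``since $T$ is $J$-class, at least one of $T|_{X_{1}}$, $T|_{X_{2}}$ is $J$-class'' (and then neither can be, by Proposition~\ref{spectralradius}), whereas you spell out the projection argument explicitly and conclude $X_{1}=\{0\}$ directly; your appeal to Proposition~2.10 is fine but could equally be replaced by the one-line estimate $\|(T|_{X_{1}})^{k_{n}}Px_{n}\|\le \|(T|_{X_{1}})^{k_{n}}\|\,\|Px_{n}\|\to 0$.
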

\begin{proof}
Assume, on the contrary, that $\sigma (T) \cap \partial D =\emptyset$. Then we have $\sigma (T)=\sigma_1\cup \sigma_2$ where $\sigma_1=\{ \lambda \in \mathbb{C}:
|\lambda |<1\}$ and $\sigma_2=\{ \lambda \in \mathbb{C}: |\lambda |>1\}$. If at least one of the sets $\sigma_1$, $\sigma_2$ is empty, we reach a contradiction
because of Proposition \ref{spectralradius}. Assume now that both $\sigma_1$, $\sigma_2$ are non-empty. Applying Riesz decomposition theorem, see \cite{RR}, there
exist invariant subspaces $X_1$, $X_2$ of $X$ under $T$ such that $X=X_1\oplus X_2$ and $\sigma (T_i)=\sigma_i$, $i=1,2$, where $T_i$ denotes the restriction of $T$
to $X_i$, $i=1,2$. It follows that $T=T_1\oplus T_2$ and since $T$ is $J$-class it is easy to show that at least one of $T_1$, $T_2$ is a $J$-class operator. By
Proposition \ref{spectralradius} we arrive again at a contradiction.
\end{proof}

\begin{proposition}
Let $T:l^{2}(\mathbb{N})\rightarrow l^{2}(\mathbb{N})$ be a unilateral backward weighted shift with positive weight sequence $\{ \alpha_n\}$ and consider a vector
$x=(x_1,x_2,\ldots )\in l^{2}(\mathbb{N})$. The following are equivalent.
\begin{enumerate}
\item[(i)] $T$ is hypercyclic;

\item[(ii)] $J(x)=l^{2}(\mathbb{N})$;

\item[(iii)] $J(x)^o\neq\emptyset$.
\end{enumerate}
\end{proposition}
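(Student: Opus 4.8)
The plan is to prove the cycle $(i)\Rightarrow(ii)\Rightarrow(iii)\Rightarrow(i)$. The first implication is free: if $T$ is hypercyclic then Theorem 3.1 gives $J(z)=l^{2}(\mathbb{N})$ for \emph{every} $z\in l^{2}(\mathbb{N})$, in particular for the given $x$; and $(ii)\Rightarrow(iii)$ is trivial. So the entire content lies in $(iii)\Rightarrow(i)$, which I would establish in contrapositive form: assuming $T$ is \emph{not} hypercyclic, I will show that $J(z)\subseteq\{0\}$ for \emph{every} $z\in l^{2}(\mathbb{N})$, so that in particular $J(x)^o=\emptyset$.

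For the contrapositive, normalize the shift so that $Te_n=\alpha_n e_{n-1}$ for $n\ge 2$ and $Te_1=0$, and put $P_n=\prod_{k=2}^{n}\alpha_k$ (with $P_1=1$). By the classical characterization of hypercyclic weighted shifts due to Salas (see e.g. \cite{GE}), $T$ fails to be hypercyclic exactly when $M:=\sup_n P_n<\infty$, so I assume this. The key observation is the coordinate formula $(T^k z)_j=\big(\prod_{i=j+1}^{j+k}\alpha_i\big)z_{j+k}=\frac{P_{j+k}}{P_j}\,z_{j+k}$, which shows that for each fixed $j$ the amplification factor $P_{j+k}/P_j$ is bounded by $M/P_j$ \emph{uniformly in $k$}. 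Now let $y\in J(z)$ and pick witnesses $z_n\to z$ in norm together with $k_n\nearrow\infty$ and $T^{k_n}z_n\to y$. Norm convergence forces coordinatewise convergence, hence $y_j=\lim_n\frac{P_{j+k_n}}{P_j}(z_n)_{j+k_n}$ for each $j$. But $(z_n)_{j+k_n}\to0$ as $n\to\infty$: indeed $|(z_n)_{j+k_n}|\le\|z_n-z\|_2+|z_{j+k_n}|$, where the first term tends to $0$ because $z_n\to z$, while the second tends to $0$ because $z\in l^{2}(\mathbb{N})$ and $j+k_n\to\infty$. Since $P_{j+k_n}/P_j\le M/P_j$ stays bounded, we get $y_j=0$ for all $j$, i.e. $y=0$. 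Thus $J(z)\subseteq\{0\}$ (in fact $J(z)=\{0\}$, the reverse inclusion following by truncating $z$), so $J(z)^o=\emptyset$, which is the desired contrapositive.

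The only genuine obstacle here is to see that an \emph{upper} bound on the partial products $P_n$ already suffices — one should not try to produce a lower bound on $P_n$, since when $\sup_n P_n<\infty$ the sequence $\{P_n\}$ may well oscillate down to $0$; the point is that dividing by the fixed number $P_j$ controls the amplification coordinate by coordinate, independently of $k$. Everything else is routine: the coordinate formula for $T^k$, the two trivial implications of the cycle, and the quotation of Salas' theorem (the sole external input). I would, however, be careful to fix and state the indexing convention for the weighted shift explicitly, so that the partial product $P_n$ is unambiguous, and to record the harmless fact that $T^k$ annihilates the first $k$ standard basis vectors, which is what makes the reverse inclusion $\{0\}\subseteq J(z)$ and the coordinate truncations work.
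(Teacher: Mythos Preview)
Your argument is correct, and the core computation --- the coordinate formula for $T^{k}$ together with the estimate $|(z_n)_{j+k_n}|\le\|z_n-z\|+|z_{j+k_n}|\to 0$ and the appeal to Salas' criterion --- is exactly what the paper uses. The difference is one of packaging. The paper proves $(iii)\Rightarrow(i)$ \emph{directly}: since $J(x)^o\neq\emptyset$ one may pick $y\in J(x)$ with $y_1\neq 0$; looking only at the first coordinate of $T^{k_n}z_n$ and using the same estimate on $(z_n)_{k_n+1}$, the relation $\big(\prod_{i=1}^{k_n}\alpha_i\big)(z_n)_{k_n+1}\to y_1\neq 0$ forces $\prod_{i=1}^{k_n}\alpha_i\to+\infty$, which is Salas' hypercyclicity condition. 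So the paper needs only a single coordinate and a single nonzero target value, whereas your contrapositive runs the same estimate at every coordinate under the uniform bound $P_{j+k}/P_j\le M/P_j$. What your route buys is a sharper dichotomy: you actually show that if $T$ is not hypercyclic then $J(z)=\{0\}$ for \emph{every} $z\in l^2(\mathbb{N})$, which is stronger than what the proposition asks (and the paper notes in Remark~5.14 that the weaker implication $(ii)\Rightarrow(i)$ can alternatively be read off from Feldman's work). The paper's direct argument, in turn, is slightly leaner and avoids having to observe that the ratios $P_{j+k}/P_j$ stay bounded.
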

\begin{proof}
It only remains to prove that (iii) implies (i). Suppose $J(x)^o\neq\emptyset$.  Then there exists a vector $y=(y_1,y_2,\ldots )\in J(x)$ such that $y_1\neq 0$. Hence
we may find a strictly increasing sequence $\{ k_n\}$ of positive integers and a sequence $\{ z_n \}$ in $l^{2}(\mathbb{N})$, $z_n=(z_{n1},z_{n2},\ldots )$, such that
$z_n\to x$ and $T^{k_n}z_n\to y$. We have
\begin{equation*}
|(T^{k_n}z_n)_1-y_1|=\left|\left( \prod_{i=1}^{k_n}\alpha_{i}\right) z_{n(k_n+1)}-y_1\right| \to 0.
\end{equation*}
Observe that $|z_{n(k_n+1)}|\leq |z_{n(k_n+1)}-x_{k_n+1}|+|x_{k_n+1}|\leq \| z_n-x\|+|x_{k_n+1}|$. The above
inequality implies $z_{n(k_n+1)}\to 0$ and since $y_1\neq 0$ we arrive at $\prod_{i=1}^{k_n}\alpha_{i}\to
+\infty $. By Salas' characterization of hypercyclic unilateral weighted shifts, see \cite{Salas2}, it follows
that $T$ is hypercyclic.
\end{proof}

\begin{remark}
We would also like to mention that (ii) implies (i) in the previous proposition, is an immediate consequence of
Proposition 5.3 in \cite{Fel1}. Let us stress that in case $T$ is a unilateral  backward weighted shift on
$l^{2}(\mathbb{N})$, the condition $J(0)=l^{2}(\mathbb{N})$ implies that $T$ is hypercyclic. For a
characterization of $J$-class unilateral weighted shifts on $l^{\infty}(\mathbb{N})$ in terms of their weight
sequence see \cite{CosMa}.
\end{remark}

\begin{proposition}
Let $T:l^{2}(\mathbb{Z})\rightarrow l^{2}(\mathbb{Z})$ be a bilateral backward weighted shift with positive weight sequence $\{ \alpha_n\}$ and consider a non-zero
vector $x=(x_n)_{n\in\mathbb{Z}}$ in $l^{2}(\mathbb{Z})$. The following are equivalent.
\begin{enumerate}
\item[(i)] $T$ is hypercyclic;

\item[(ii)] $J(x)=l^{2}(\mathbb{Z})$;

\item[(iii)] $J(x)^o\neq\emptyset$.
\end{enumerate}
\end{proposition}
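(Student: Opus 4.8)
Both easy implications go exactly as in Proposition 5.16: (i) $\Rightarrow$ (ii) is Theorem 3.1 (a hypercyclic operator satisfies $J(z)=X$ for every $z$), and (ii) $\Rightarrow$ (iii) is trivial, so the whole matter is (iii) $\Rightarrow$ (i). Write $Te_n=\alpha_ne_{n-1}$ and let $\pi\colon\mathbb{Z}\to(0,\infty)$ be the unique function with $\pi(0)=1$ and $\pi(n)/\pi(n-1)=\alpha_n$, so that $\prod_{s=a+1}^{b}\alpha_s=\pi(b)/\pi(a)$ whenever $a<b$. In this notation Salas' characterization of hypercyclic bilateral weighted shifts (\cite{Salas2}) reads: $T$ is hypercyclic if and only if for every $q\in\mathbb{N}$ there is a strictly increasing sequence $(m_n)$ of positive integers with $\pi(j+m_n)\to\infty$ (growth to the right) and $\pi(j-m_n)\to\infty$ (decay to the left) for all $|j|\le q$. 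So the plan is to manufacture such sequences out of the hypothesis $J(x)^o\neq\emptyset$.

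Fix an index $j_0$ with $x_{j_0}\neq 0$ (possible since $x\neq 0$). The basic identity is $(T^{k}z)_m=z_{m+k}\prod_{s=m+1}^{m+k}\alpha_s=z_{m+k}\,\pi(m+k)/\pi(m)$. Let $y\in J(x)$, realized by $z_n\to x$, $k_n\to\infty$, $T^{k_n}z_n\to y$. Since $z_n\to x$ in $l^{2}(\mathbb{Z})$ the sequence $\{z_n\}$ is uniformly small outside a finite window, so $z_{n,m+k_n}\to 0$ for each fixed $m$; comparing $m$-th coordinates gives $z_{n,m+k_n}\,\pi(m+k_n)/\pi(m)\to y_m$, hence $\pi(m+k_n)\to\infty$ whenever $y_m\neq 0$. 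On the other hand the coordinate of index $j_0-k_n$ of $T^{k_n}z_n$ tends to $0$ (a coordinate of an $l^2$-limit, evaluated at an index going to $-\infty$), while $z_{n,j_0}\to x_{j_0}\neq 0$; this forces $\pi(j_0)/\pi(j_0-k_n)\to 0$, i.e.\ $\pi(j_0-k_n)\to\infty$. Thus each $y\in J(x)$ yields ``growth'' at every coordinate where $y$ is non-zero and ``decay'' at the one fixed coordinate $j_0$, along a single increasing sequence $(k_n)$. Up to here this just repeats the unilateral argument of Proposition 5.16, the only new ingredient being the decay information harvested from $x$.

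The obstacle is that Salas' criterion needs decay at all of $-q,\dots,q$, whereas so far we have it only at $j_0$; I would remove it using the boundedness of $T$. Since $\alpha_s\le\|T\|$ for all $s$, one has $\pi(b)\le\|T\|^{\,b-a}\pi(a)$, hence $\pi(a)\ge\pi(b)/\|T\|^{\,b-a}$, for all $a\le b$. Fix $q$, set $c:=\max(q-j_0,0)+1$ and $Q:=q+c$; because $J(x)$ has non-empty interior, for all sufficiently large $q$ we may choose $y$ in that interior ball which is supported in $[-Q,Q]$ and has all of its coordinates there non-zero (truncate a centre of the ball and perturb the vanishing entries). For this $y$ and its sequence $(k_n)$ the previous paragraph gives growth at every $|l|\le Q$ and decay at $j_0$. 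Put $m_n:=k_n+c$. For $|j|\le q$ we have $|j+c|\le Q$, so $\pi(j+m_n)=\pi\big((j+c)+k_n\big)\to\infty$; and since $j-c\le j_0$ by the choice of $c$, so that $j-m_n\le j_0-k_n$, boundedness gives $\pi(j-m_n)\ge\pi(j_0-k_n)/\|T\|^{\,j_0-j+c}\to\infty$. Hence $(m_n)$ witnesses Salas' condition for this $q$; since $q$ was an arbitrary large integer and the condition for a given $q$ follows from that for any larger one, $T$ is hypercyclic. I expect the two ``coordinate read-off'' limits (which rely on uniform smallness of the tails of $\{z_n\}$ and on coordinates of an $l^2$-limit vanishing at infinity) and the index bookkeeping in the shift $k_n\mapsto k_n+c$ to be the only places needing care; everything else is routine.
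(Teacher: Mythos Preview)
Your proof is correct and follows the same skeleton as the paper's: read off growth of the weight-products from a non-zero coordinate of some $y\in J(x)$, read off decay from the fixed non-zero coordinate $x_{j_0}$ of $x$, and invoke Salas' characterization. The paper, however, only treats the case $q=0$ explicitly (writing ``for simplicity reasons we assume that $q=0$'') and does not spell out how one obtains the two conditions for all $|j|\le q$ simultaneously from a single $y$ and its associated sequence $(k_n)$. You actually carry this out: you choose $y$ supported in $[-Q,Q]$ with all coordinates there non-zero (so growth is available at every index in that window), shift the sequence $k_n\mapsto m_n=k_n+c$, and use the boundedness $\alpha_s\le\|T\|$ to transfer the decay estimate from the single index $j_0$ to every $|j|\le q$. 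So the underlying idea is the same as the paper's, but your execution is more complete than the paper's sketch; the boundedness-propagation step in particular is a genuine ingredient that the paper leaves implicit.
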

\begin{proof}
It suffices to show that (iii) implies (i). In view of Salas' Theorem 2.1 in \cite{Salas2}, we shall prove that there exists a strictly increasing sequence $\{ k_n\}$
of positive integers such that for any integer $q$, $\prod_{i=1}^{k_n}\alpha_{i+q}\to +\infty$ and $ \prod_{i=0}^{k_n-1}\alpha_{q-i}\to 0$. Since $x$ is a non-zero
vector, there exists an integer $m$ such that $x_m\neq 0$. Without loss of generality we may assume that $m$ is positive. Suppose $J(x)^o\neq\emptyset$. Then there
exists a vector $y=(y_n)_{n\in\mathbb{Z}}$ in $l^{2}(\mathbb{Z})$ such that $y_1\neq 0$. Hence we may find a strictly increasing sequence $\{ k_n\}$ of positive
integers and a sequence $\{ z_n \}$ in $l^{2}(\mathbb{Z})$, $z_n=(z_{nl})_{l\in\mathbb{Z}}$, such that $z_n\to x$ and $T^{k_n}z_n\to y$. For simplicity reasons we
assume that $q=0$. Arguing as in the proof of Proposition 5.13 we get that $\prod_{i=1}^{k_n}\alpha_{i}\to +\infty $. On the other hand observe that
\begin{equation*}
|(T^{k_n}z_n)_{m-k_{n}}-y_{m-k_{n}}|=\left| \left( \prod_{i=0}^{m}\alpha_{i}\right) \left( \prod_{i=1}^{k_n-m+1}\alpha_{-i}\right) z_{nm}-y_{m-k_{n}}\right| \to 0.
\end{equation*}
Since $x_m\neq 0$ there exists a positive integer $n_{0}$ such that $ |z_{nm}|\geq \frac{|x_m|}{2}$ for every $n\geq n_{0}$. We also have $(T^{k_n}z_n)_{m-k_{n}}\to 0
$. The above imply that $\prod_{i=0}^{k_n-1}\alpha_{-i}\to 0 $.
\end{proof}

\section{Open problems}
\textbf{Problem 1}.

Let $T:X\to X$ be an operator on a Banach space $X$. Suppose there exists a vector $x\in X$ such that $J(x)^o\neq \emptyset$. Is it true that $J(x)=X$?\medskip

Ansari \cite{A1} and Bernal \cite{Bernal} gave a positive answer to Rolewicz' question if every separable and infinite dimensional Banach space supports a hypercyclic
operator. Observe that we showed that the non-separable Banach space $l^{\infty }(\mathbb{N})$ admits a $J$-class operator, while on the other hand Berm\'{u}dez and
Kalton \cite{BeKa} showed that $l^{\infty }(\mathbb{N})$ does not support topologically transitive operators. Hence it is natural to raise the following
question.\medskip

\textbf{Problem 2}.

Does every non-separable and infinite dimensional Banach space support a $J$-class operator? \medskip

D. Herrero in \cite{H2} established a spectral description of the closure of the set of hypercyclic operators acting on a Hilbert space. Below we ask a similar
question for $J$-class operators.\medskip

\textbf{Problem 3}.

Is there a spectral description of the closure of the set of $J$-class operators acting on a Hilbert space?\medskip

\textbf{Problem 4}.

Let $X$ be a separable Banach space and $T:X\to X$ be an operator. Suppose that $J(x)^o\neq \emptyset$ for every $x\in X$. Does it follow that $T$ is
hypercyclic?\medskip

Inspired by Grivaux's result that every operator on a complex Hilbert space can be written as a sum of two
hypercyclic operators, we consider the following. \medskip

\textbf{Problem 5}.

Is it true that any operator on $l^{\infty}(\mathbb{N})$ can be written as a sum of two $J$-class operators?

\end{document}